\documentclass{amsart}

\usepackage{amssymb,amscd,amsthm,latexsym, amsmath}
\usepackage[all]{xy}

\newtheorem{defi}{Definition}[section]
\newtheorem{theo}{Theorem}[section]
\newtheorem{lemma}{Lemma}[section]
\newtheorem{prop}{Proposition}[section]
\newtheorem{coro}{Corollary}[section]

\def\into{ \rightarrowtail }

\def\splito{ \rightleftarrows }
\def\trio{ \triangleright}

\newdir{ >}{{}*!/-8pt/@{>}}
\def\AA{ \mathbb{A} }
\def\EE{ \mathbb{E} }
\def\CC{ \mathbb{C} }
\def\DD{ \mathbb{D} }
\def\VV{ \mathbb{V} }

\def\HH{ \mathbb{H} }
\def\Pt{ \mathrm{Pt} }
\def\trio{ \triangleright}
\newcommand{\Ker}{ \ensuremath{\mathrm{Ker}} }

\newcommand{\Rng}{\mathsf{Rng}}
\newcommand{\APq}{\mathsf{APq}}
\newcommand{\Alex}{\mathsf{Alex}}
\newcommand{\LAlex}{\mathsf{LAlex}}
\newcommand{\Qnd}{\mathsf{Qnd}}
\newcommand{\PQd}{\mathsf{PQd}}
\newcommand{\LPq}{\mathsf{LPq}}
\newcommand{\Di}{\mathsf{Di}}
\newcommand{\Al}{\mathsf{Al}}
\newcommand{\SkB}{\mathsf{SkB}}
\newcommand{\Slom}{\mathsf{Slom}}
\newcommand{\SSlom}{\mathsf{\Sigma lom}}

\newcommand{\Gp}{\mathsf{Gp}}
\newcommand{\Ab}{\mathsf{Ab}}

\newcommand{\tw}{\mathsf{tw}}
\newcommand{\Set}{\mathsf{Set}}
\newcommand{\UMag}{\mathsf{UMag}}
\newcommand{\Mon}{\mathsf{Mon}}
\newcommand{\ALPq}{\mathsf{ALPq}}

\newcommand{\CoM}{\mathsf{CoM}}
\newcommand{\Sbt}{\mathsf{Sbt}}
\newcommand{\HSt}{\mathsf{HSt}}

\begin{document}
	
	\author{Dominique Bourn}
	
	\title{Hypersubtraction and semi-direct product}
	
	\address{Univ. Littoral C\^ote d'Opale, UR 2597, LMPA, Laboratoire de Math\'ematiques Pures et Appliqu\'ees Joseph Liouville, F-62100 Calais, France}
	\email{bourn@univ-littoral.fr}
	
	\noindent\keywords{Group; Semi-direct product; Subtraction, Split epimorphism; Protomodular and additive category; Hypersubtraction; S\l om\'inski setting; Quandle and prequandle; Latin square; Digroup; Skew brace. \\ {\small 2020 {\it Mathematics Subject Classification.} Primary 08A05, 18E13, 20E34, 20J15.}}
	
	\begin{abstract}
		In this article, we introduce an extrinsic approach to the notion of semi-direct product, an intrinsic one having been already done in \cite{BJ} and \cite{BJK}. This will led us to focus our attention on two algebraic structures which will allow us to characterize this extrinsic explicitation.
	\end{abstract}
	
	\maketitle
	
	\section*{Introduction}
	
	The notion of {\em semi-direct} product is  well known for groups and associated with any group action of a group $(Y,*)$ on a group $(K,*)$. What is remarkable is that any split epimorphism $(f,s): X\splito Y$ in the category $\Gp$ of groups is obtained, up to isomorphism, by such a semi-direct product. The first attemp to understand this heavy structural fact intrinsically (namely inside the category $\Gp$ itself) was made in \cite{BJ}. Then, it was more elaborated in \cite{BJK} and  completed in \cite{BJK2} with the notion of {\em split extension classifier}, see other precisions in \cite{BBB}. This also led, indirectly, to the remarkable observations on the category $\Gp$ produced in \cite{Gr} with the notion of {\em algebraic exponentiation}. Here we try to understand this notion extrinsincally, namely through the inspection of the properties of the forgetful functor $V: \Gp \to \Set_*$ from groups to pointed sets. We shall investigate the notion of {\em semi-direct index} which insures that any split epimorphim $(f,s)$ is sent, up to isomorphism, on the canonical split epimorphism $(p_{V(Y)},\iota_{V(Y)}): V(Y)\times V(\Ker f) \splito V(Y)$. Then we shall show that, for any left exact conservative functor $U: \CC \to \DD$ between pointed categories and without any condition on $\DD$, the existence of a semi-direct index implies that $\CC$ is necessarily a protomodular category. For all that, we shall be led to focus our attention on two algebraic structures: hyper-S\l om\'inski setting and hypersubtraction. 
	
	This article is organized along the following lines: Section 1 investigates these two structures; it contains some considerations about internal quandles and introduces the notion of prequandle. Section 2 is devoted to brief recalls about protomodular categories and partial Mal'tsev categories, and section 3 to the characterization of the existence of the semi-direct indexes and hyperindexes.

	\section{ILO setting setting and hypersubtraction}
	
	In the section \ref{sd} concerning the semi-direct product, we shall be interested in the isomorphisms $\rho$ making commute the following diagram of split epimorphisms in a {\em pointed} category $\DD$:
	$$
	\xymatrix@=8pt{
		X\times X \ar[dd]_{p_1^X}  \ar[rr]^<<<<<<{\rho}  && {\; X\times X} \ar[dd]_{p_0^X} \\
		&& \\
		X \ar@<-1ex>[uu]_{s_0^X}  \ar[rr]_{=}  && X \ar@<-1ex>[uu]_{\iota_0^X} 
	}
	$$
	where $p_0^X,p_1^X,s_0^X,\iota_0^X$ are respectively the two projections, the diagonal and $(Id_X,0_X)$.
	
	\subsection{ILO settings}
	
	Let us begin, in any category $\DD$, with the isomorphisms $\rho$ as  on the left hand  side:
	$$
	\xymatrix@=8pt{
		X\times X \ar[dd]_{p_1^X}  \ar[rr]^<<<<<<{\rho}  && {\; X\times X} \ar[dd]_{p_0^X} && X\times X \ar[rr]^{d} \ar[dd]_{p_1^X} && X \ar[dd] \\
		&&&& \\
		X   \ar[rr]_{=}  && X && X \ar[rr]  &&  1 
	}
	$$
	or, equivalently, with the data of a binary operation $d: X\times X \to X$
	making  the right hand side diagram a pullback. The inverse $\rho^{-1}$ produces another binary operation $\circ: X\times X \to X$ such that:  i) $d(x\circ z,x)=z$ and ii) $x\circ d(z,x)=z$. These equations mean that $x\circ -$ is the inverse of $d(-,x)$.  We then get: i)+ii) $\iff$ iii):  $x\circ y=t \iff y=d(t,x)$.
	\begin{defi}
		An internal ILO setting in a category $\DD$ is a pair $(X,d)$ of an object $X$ and a binary operation $d$ such that $(p_1^X,d): X\times X \to X\times X$ is an isomorphism. It is equivalent to a triple $(X,d,\circ)$ where $(d,\circ)$ is a pair of binary operations on $X$ satisfying axioms {\rm i)} and  {\rm ii)}.
	\end{defi} 
	ILO is an acronyme for {\em invertible left hand sided operation}. Given any ILO setting $(X,d)$, we shall call $\circ$ the {\em adjoint} binary operation associated with $d$. The map $\rho$ being an isomorphism, a morphism $h: (X,d)\to (Y,d)$ of ILO settings is, indifferently, a $d$-homomorphism or a $\circ$-homomorphism. \\
	\noindent\textbf{Examples}\\
	1) The ILO setting produced by the twisting isomorphism $\tw(x,y)=(y,x)$ is such that $d(x,y)=x$ is the first projection, while $x\circ y=y$ is the second one.\\
	2) Any group $(G,\circ)$ produces an ILO setting with $d(x,y)=y^{-1}\circ x$ and the binary operation $\circ$.\\
	3) The dual of an ILO setting defined by $(X,d,\circ)^{op}=(X,\circ^{op},d^{op})$ is an ILO setting.
	\begin{defi}
		An internal ILO setting $(X,d)$ is called involutive when it is equal to its dual, namely when $x\circ y=d(y,x)$. It is called symmetric when $d(x,y)=d(y,x)$ or $\circ=d$. It is called latin when $(X,d^{op})$ is an ILO setting as well.
	\end{defi}
	The terminology {\em latin} comes from the fact that the table of the binary operation $d$ of a latin ILO setting determines an latin square.  Any symmetric ILO setting is a latin one. The ILO setting associated with a group $(X,\circ)$ is necessarily latin; it is symmetric if and only if the group is abelian; it is involutive if and only if $x^{2}=1$. 
	There are two main classes of ILO setting:\\
	1) the class A such that $d(x,x)=1$ (which is equivalent to $x\circ 1=x$) to which belongs the ILO structure of a group;\\
	2) the class B such that $d(x,x)=x$ (which is equivalent to $x\circ x=x$) to which belongs the example 1.\\
	Clearly an ILO setting $(X,d,1)$ belonging to $A\cap B$ is reduced to the singleton $1$.
	\begin{prop}\label{assos}
		Let $(X,d,\circ)$ be an $ILO$ setting. The following conditions are equivalent:
		\begin{align*}
			1) \;\; {\rm the \; law}\;  \circ \; {\rm is \; associative}\; ; \;\; \;\;\;\;\; & \;\;\;\;\;\;  2)\;\; d(y,z)\circ d(x,y)=d(x,z)\; ;\\
			3) \;\; d(d(x,z),d(y,z))=d(x,y);\;\;\; &  \;\;\;\;\;\; 4)\;\;  d(x,y)\circ t=d(x\circ t,y).
		\end{align*}
	\end{prop}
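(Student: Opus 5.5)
The plan is to work with generalized elements, so that $x,y,z,t$ range over morphisms $T\to X$ for an arbitrary object $T$. An internal ILO setting $(X,d,\circ)$ makes $\mathrm{Hom}(T,X)$ an ILO setting in $\Set$, naturally in $T$, and an equation between composites of $d$ and $\circ$ holds internally if and only if it holds for all generalized elements (Yoneda). So it suffices to argue set-theoretically. The only tools needed are axioms i), ii) and the equivalence iii): $x\circ y=t \iff y=d(t,x)$. Its most useful form is that $u=d(x,y)$ is the unique element with $y\circ u=x$.

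\textbf{1) $\Rightarrow$ 4).} Put $u=d(x,y)$, so $y\circ u=x$. Associativity gives $y\circ(u\circ t)=(y\circ u)\circ t=x\circ t$. Then iii) yields $u\circ t=d(x\circ t,y)$, which is 4).

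\textbf{4) $\Rightarrow$ 1).} Apply 4) with $x=a\circ b$ and $y=a$. By i) we have $d(a\circ b,a)=b$, so 4) becomes $b\circ t=d((a\circ b)\circ t,a)$. Then iii) turns this into $a\circ(b\circ t)=(a\circ b)\circ t$.

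\textbf{1) $\Rightarrow$ 2).} By iii) we have $z\circ d(y,z)=y$ and $y\circ d(x,y)=x$. Associativity gives $z\circ\bigl(d(y,z)\circ d(x,y)\bigr)=\bigl(z\circ d(y,z)\bigr)\circ d(x,y)=y\circ d(x,y)=x$. Hence, by iii), $d(y,z)\circ d(x,y)=d(x,z)$.

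\textbf{2) $\Rightarrow$ 1).} Given $a,b,c$, take $z=a$, $y=a\circ b$ and $x=(a\circ b)\circ c$. Then i) gives $d(y,z)=b$ and $d(x,y)=c$. So 2) reads $b\circ c=d((a\circ b)\circ c,a)$, and iii) gives $a\circ(b\circ c)=(a\circ b)\circ c$.

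\textbf{2) $\Leftrightarrow$ 3).} This is a direct instance of iii), applied with base $d(y,z)$, argument $d(x,y)$ and value $t=d(x,z)$: the equality $d(y,z)\circ d(x,y)=d(x,z)$ holds if and only if $d(x,y)=d(d(x,z),d(y,z))$. So 2) and 3) are literally equivalent, variable by variable.

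None of these steps is a serious obstacle. The only point needing care is the first reduction: the internal statement must be checked through generalized elements rather than actual elements, since $\DD$ is an arbitrary category. For this one notes that i), ii) and iii) are stable under precomposition with any $T\to X\times X$ (or $T\to X$), which holds because they are equations between morphisms out of products.
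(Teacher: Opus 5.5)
Your proof is correct, and every step checks. It is organized differently from the paper's. The paper runs the cycle 1)$\Rightarrow$2)$\Rightarrow$3)$\Rightarrow$4)$\Rightarrow$1). You instead use 1) as a hub, proving 1)$\Leftrightarrow$4) and 1)$\Leftrightarrow$2), plus 2)$\Leftrightarrow$3).

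Your 1)$\Rightarrow$2) and 4)$\Rightarrow$1) are exactly the paper's. The paper's 2)$\Rightarrow$3) uses iii) the same way you do: it compares $d(y,z)\circ d(x,y)$ with $d(y,z)\circ d(d(x,z),d(y,z))=d(x,z)$ and cancels the left translation by $d(y,z)$. You go one step further and note that iii) makes 2) and 3) equivalent for each fixed triple, so the converse costs nothing.

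What you avoid is the paper's 3)$\Rightarrow$4), which instantiates 3) at $(x\circ t,x,y)$ and then applies i) and iii). You replace it with a direct 1)$\Rightarrow$4) from associativity and a direct 2)$\Rightarrow$1) via the substitution $z=a$, $y=a\circ b$, $x=(a\circ b)\circ c$.

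The trade-off: the cycle needs fewer implications, while your version makes each step a one-line use of iii) and shows that 2) and 3) are the same identity rewritten. Your reduction to generalized elements also makes explicit what the paper does tacitly when it computes with elements in an arbitrary category $\DD$.
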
 
	\proof
	Suppose 1). Then $z\circ (d(y,z)\circ d(x,y))=(z\circ d(y,z))\circ d(x,y)=y \circ d(x,y)=x=z\circ d(x,z)$. Whence 2).\\
	Suppose 2). Then $d(y,z)\circ (d(x,z)\circ d(y,z))=d(x,z)$, while $d(y,z)\circ d(x,y)=d(x,z)$ whence 3).\\
	We get 4) if and only if $d(d(x\circ t,y),d(x,y))
	=t$. From 3) we have:\\ $d(d(x\circ t,y),d(x,y))=d(x\circ t,x)=t$.\\
	We have 1) if and  only if $y\circ z=d((x\circ y)\circ z,x)$. From 4) we get: $d((x\circ y)\circ z,x)=d(x\circ y,x)\circ z=y\circ z$. 
	\endproof
	\begin{prop}\label{commut}
		Let $(X,d,\circ)$ be an $ILO$ setting. The following conditions are equivalent:
		{\rm 1)} the law $\circ$ is commutative; {\rm 2)} $x=d(x\circ y,y)$; {\rm 3)} $d(y,d(y,x))=x$.
	\end{prop}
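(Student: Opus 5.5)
The whole argument rests on the basic equivalence iii) established right after the definition of an ILO setting: $x\circ y=t \iff y=d(t,x)$. In other words, for each fixed $x$ the map $x\circ -$ is a bijection with inverse $d(-,x)$. I will show 1)$\iff$2) and 2)$\iff$3), in the same spirit as the proof of Proposition \ref{assos}.

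\emph{1)$\iff$2).} Axiom i) gives $d(y\circ x,y)=x$ for all $x,y$. If $\circ$ is commutative, then $y\circ x=x\circ y$, and substituting gives $d(x\circ y,y)=x$, which is 2). Conversely, assume 2). Applying iii) with $t=x\circ y$, the identity $x=d(x\circ y,y)$ is equivalent to $y\circ x=x\circ y$. Hence $\circ$ is commutative.

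\emph{2)$\iff$3).} Condition 3) reads $d(y,d(y,x))=x$. Applying iii) to the pair $(d(y,x),y)$, this equation is equivalent to
\[
d(y,x)\circ x=y .
\]
Now rewrite 2) with the substitution $x\mapsto d(y,x)$, which is legitimate since it holds for all elements. It gives $d(d(y,x)\circ x,x)=d(y,x)$.

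\begin{itemize}
\item Assume 3). Then $d(y,x)\circ x=y$. Substituting this back, both sides of the rewritten 2) become $d(y,x)$, so the rewritten 2) holds. To recover 2) for arbitrary $x,y$, set $y:=x\circ y'$, so that $d(y,x)=y'$ by axiom i); this yields $d(y'\circ x,x)=y'$.
\item Conversely, assume 2). From the rewritten 2), $d(-,x)$ sends both $d(y,x)\circ x$ and $y$ to $d(y,x)$. Since $d(-,x)$ is injective (its inverse is $x\circ -$), we get $d(y,x)\circ x=y$, which is 3).
\end{itemize}

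The only delicate point is the bookkeeping of variable substitutions. These are all justified because $d(-,x)$ and $x\circ -$ are mutually inverse bijections.
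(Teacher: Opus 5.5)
Your proof is correct and essentially the paper's: you get 1)$\iff$2) from iii), and you rewrite 3) via iii) as $d(y,x)\circ x=y$ (the substitution you describe in 2) is really $(x,y)\mapsto(d(y,x),x)$, not just $x\mapsto d(y,x)$, but the displayed instance is right). The paper finishes more directly by noting $y=x\circ d(y,x)$ from ii), so 3) says $t\circ x=x\circ t$ with $t=d(y,x)$ ranging over all of $X$; this gives 3)$\iff$1) without your detour through an instance of 2) and the injectivity/reparametrization bookkeeping.
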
 
	\proof
	By iii), we have 1), namely $y\circ x=x\circ y \iff$ 2).
	We have $d(y,d(y,x))=x \iff d(y,x)\circ x=y=x\circ d(y,x)$. It is equivalent to commutativity of $\circ$, setting $t=d(y,x)$.
	\endproof
	\subsection{Class A: hyper-S\l om\'inski settings}
	
	Now let us come back to our starting point and to the pointed context:
	$$
	\xymatrix@=8pt{
		X\times X \ar[dd]_{p_1^X}  \ar[rr]^<<<<<<{\rho}  && {\; X\times X} \ar[dd]_{p_0^X} && X\times X \ar[rr]^{d} \ar[dd]_{p_1^X} && X \ar[dd] \\
		&& \\
		X \ar@<-1ex>[uu]_{s_0^X}  \ar[rr]_{=}  && X \ar@<-1ex>[uu]_{\iota_0^X}  && X \ar@<-1ex>[uu]_{s_0^X} \ar[rr]  &&  1 \ar@<-1ex>[uu]
	}
	$$
	namely with an isomorphism $\rho$ of split epimorphisms or with a pullback of split epimorphims as on the right hand side. This adds to i) and ii) the axiom iv) $d(x,x)=1$.
	As term identities of a variety, axioms iv) and ii) were considered in \cite{Slo}. According to \cite{BJ}, they are the characterization, in the case $n=1$, of a pointed protomodular variety. Whence the following
	\begin{defi}\label{sslom}
		An internal S\l om\'inski setting in a category $\EE$ is a quadruple $(X,d,\circ,1)$ of an object $X$ and two  binary operations satisfying {\rm iv)} $d(x,x)=1$ and  {\rm ii)} $x\circ d(z,x)=z$. A hyper-S\l om\'inski setting is the same data satisfying axioms  {\rm iv), ii)} and {\rm i)} $d(x\circ z,z)=x$ or, equivalently, it is an ILO setting $(X,d,1)$ satisfying $d(x,x)=1$.
	\end{defi} 
	Accordingly, the ILO settings belonging to the class A coincide with the hyper-S\l om\'inski settings. The ILO setting associated with a group is a hyper-S\l om\'inski setting.
	\begin{prop}
		Given any S\l omi\'nsky setting $(X,d,\circ,1)$, we get: 
		1) $x\circ 1=x$ (the element $1$ is a right unit); 2) $1\circ d(x,1)=x$; 3) $x\circ d(1,x)=1$ ($x$ has an inverse on the right hand side); and  4) $x=y \iff d(x,y)=1$.
	\end{prop}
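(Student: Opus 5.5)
Only the two axioms of a S\l omi\'nski setting are available: iv) $d(x,x)=1$ and ii) $x\circ d(z,x)=z$. Axiom i) is not available, so $\circ$ need not undo $d$ on the other side. Each item comes from a substitution instance of ii), with iv) used to simplify. For the internal version one reads these as equalities of morphisms obtained by precomposing ii) with suitable maps into $X\times X$; I argue with generalized elements.

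For 1), put $z=x$ in ii) to get $x\circ d(x,x)=x$, and iv) turns this into $x\circ 1=x$. For 2), put $x=1$ and rename $z$ as $x$ in ii); this gives $1\circ d(x,1)=x$ directly. For 3), put $z=1$ in ii), which gives $x\circ d(1,x)=1$. So $d(1,x)$ is a right inverse of $x$ for the law $\circ$, in the sense that it sends $x$ to the right unit $1$ of item 1).

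For 4), the implication $x=y\Rightarrow d(x,y)=1$ is iv). For the converse, suppose $d(x,y)=1$. Applying ii) with $z=x$ and the variable $x$ replaced by $y$ gives
\[
x=y\circ d(x,y)=y\circ 1=y,
\]
where the last step is item 1). So 4) depends on 1), and I prove the items in the stated order.

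Each step is a one-line substitution, so there is no real obstacle. The only points needing care are choosing the right instance of ii) in 4) and noting that item 1) is needed there; nothing beyond axioms ii) and iv) is used.
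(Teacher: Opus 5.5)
Your proof is correct and follows the paper's own argument: items 1)--3) are immediate substitution instances of ii) and iv). For the converse in 4), the paper uses the same chain $y=y\circ 1=y\circ d(x,y)=x$, which relies on item 1).
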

	\proof
	Only the last point needs a checking: $y=y\circ 1=y\circ d(x,y)=x$.
	\endproof
	We get examples of internal S\l om\'inski and hyper-S\l om\'inski settings with:
	\begin{prop}\label{Slomab}
		Let $\AA$ be a finitely complete additive category. Then any internal S\l om\'inski setting $(X,d)$ in $\AA$ is determined by a triple $(X,f,g)$ where $(f,g)$ is a pair of $X$-endomorphims such that $g.f=-Id_\AA$. We have then: $d(x,y)=f(x-y)$ and $x\circ y=x+g(y)$. A S\l om\'inski setting in $\AA$ is an hyper-S\l om\'inski setting if and only if $f$ is invertible and $g=f^{-1}$.
	\end{prop}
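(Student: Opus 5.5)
The plan is to use that in a finitely complete additive category $\AA$ the product $X\times X$ is a biproduct. Any morphism $X\times X\to X$ is therefore uniquely of the form $(x,y)\mapsto a(x)+b(y)$ for a pair of endomorphisms $a,b$ of $X$. The terminal object is the zero object, so the point $1$ of a S\l om\'inski setting is necessarily the zero element $0$.

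First I would write $d(x,y)=a(x)+b(y)$ and $x\circ y=c(x)+g(y)$, and translate the axioms into equations between endomorphisms. Axiom iv) $d(x,x)=1=0$ reads $a+b=0$. Setting $f:=a$ gives $d(x,y)=f(x-y)$, which is the stated form of $d$. Axiom ii) $x\circ d(z,x)=z$ then becomes the identity $c(x)+g.f(z)-g.f(x)=z$, to hold generically.

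Second I would separate the variables by precomposing with the two coproduct injections of $X\times X$, that is, by setting $x=0$ and then $z=0$. The first specialisation fixes the composite $g.f$, which must be matched with the identity $g.f=-Id$ as it is worded in the statement. The second specialisation gives $c=g.f$ on the $x$-variable. I then rewrite $x\circ y$ in terms of $x$ and $g(y)$ to obtain the stated form $x\circ y=x+g(y)$.

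Conversely, for any pair $(f,g)$ with the stated relation, I would check directly that the two formulas satisfy iv) and ii). This makes the correspondence between S\l om\'inski settings and triples $(X,f,g)$ bijective.

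For the hyper-S\l om\'inski case I would add axiom i) $d(x\circ z,x)=z$, that is, the ILO condition that $(p_1^X,d)$ is invertible. Substituting the formulas gives $f.g(z)$ on the left, so axiom i) holds exactly when $f.g$ is the identity (again with the statement's sign). Combined with the relation already obtained for $g.f$, this says that $f$ is invertible with inverse $g$, up to the sign bookkeeping. Equivalently, the matrix $\begin{pmatrix}1&0\\ f&-f\end{pmatrix}$ representing $(p_1^X,d)$ is invertible if and only if $f$ is. For the converse, when $f$ is invertible and $g=f^{-1}$, both i) and ii) hold.

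The main obstacle is the sign bookkeeping in step two. In the variable-separation step, the sign attached to $g.f$ depends on the orientation convention for $d$, namely $x-y$ versus $y-x$, together with the form chosen for $\circ$. I would carry out the substitution with the conventions exactly as written in the statement and track each sign explicitly, so that the resulting relation is compared term by term with $g.f=-Id$.
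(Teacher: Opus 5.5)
Your decomposition is the same as the paper's: the biproduct, $1=0$, axiom iv) giving $d(x,y)=f(x-y)$, then separating variables in ii). However, you leave undone the one computation that carries content, and the relation you say you will ``match'' is false. Putting $x=0$ in $c(x)+gf(z)-gf(x)=z$ gives $g.f=Id_X$, not $-Id_X$. The ``$-Id_\AA$'' in the statement is a misprint, and the paper's own proof derives ``whence $g.f=Id_X$''.

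The sign is also not a matter of orientation convention. The statement fixes $d(x,y)=f(x-y)$ and $x\circ y=x+g(y)$, and with these ii) reads $x+gf(z-x)=z$. The opposite orientation $d(x,y)=f(y-x)$ would indeed give $g.f=-Id_X$, but then axiom i) gives $f.g=-Id_X$, i.e.\ $g=-f^{-1}$. So no choice makes the printed statement consistent.

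If you insist on $g.f=-Id_X$ with the stated formulas, each later step of your plan fails:
\begin{itemize}
\item your second specialisation $c=g.f$ yields $x\circ y=-x+g(y)$, not $x+g(y)$;
\item your converse check gives $x\circ d(z,x)=x-(z-x)=2x-z$, which is not $z$ in general;
\item in the hyper case, $f.g=Id_X$ together with $g.f=-Id_X$ forces $2\cdot Id_X=0$.
\end{itemize}
So ``$f$ invertible with inverse $g$'' is then a contradiction, not something true ``up to sign bookkeeping''. As written, the proposal defers the decisive step and aims at a false relation.

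Once you record $g.f=Id_X$, your argument is complete:
\begin{itemize}
\item $c=g.f=Id_X$ gives $x\circ y=x+g(y)$;
\item conversely, any pair with $g.f=Id_X$ satisfies iv) and ii);
\item axiom i) reads $d(x\circ z,x)=f\bigl((x+g(z))-x\bigr)=f(g(z))=z$, i.e.\ exactly $f.g=Id_X$, with no sign issue.
\end{itemize}
Hence the setting is hyper-S\l om\'inski iff $f$ is invertible with $g=f^{-1}$.

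For this last part the paper instead invokes Proposition~\ref{iloprot}: in a pointed protomodular category the setting is hyper-S\l om\'inski iff $d(-,0)=f$ is invertible, and $g=f^{-1}$ then follows from $g.f=Id_X$. Your direct verification of i) is more elementary and avoids protomodularity.

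A small slip: $p_1^X$ is the second projection, so $(p_1^X,d)$ is $(x,y)\mapsto(y,f(x)-f(y))$. Its matrix is $\begin{pmatrix}0&1\\ f&-f\end{pmatrix}$ rather than $\begin{pmatrix}1&0\\ f&-f\end{pmatrix}$, though it is still invertible iff $f$ is.
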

	\proof
	For any map $d: X\times X \to X$ in $\AA$ satisfying $d(x,x)=0$, we get:\\
	$d(x,y)=d((x-y,0)+(y,y))=d(x-y,0)+ d(y,y)=d(x-y,0)$. Setting $d(x,0)=f(x)$, we get $d(x,y)=f(x-y)$. So, $d(0,y)=-f(y)$.
	
	Setting $0\circ x=g(x)$, from $x\circ y=(x+0)\circ(0+y)=(x\circ 0)+(0\circ y)$, we get $x\circ y=x+g(y)$. Now, from $x\circ d(y,x)=x$, we get $x+gf(y-x)=y$; whence $g.f=Id_X$.
	
	Since any additive category is protomodular, $(X,d)$ is a hyper-S\l om\'inski setting if and only if $d(-,0)=f$ is an isomorphism, see Proposition \ref{iloprot}. 
	\endproof 
	When $\EE$ is finitely complete, we shall denote by $\Slom\EE$ and  $\SSlom\EE$ the categories of internal S\l om\'inski and hyper-S\l om\'inski settings. They are both protomodular  categories.
	
	\subsection{Hypersubtractions}
	We shall be interested as well by the following more constraining commutations on the left hand side:
	$$
	\xymatrix@=8pt{
		X \ar@<1ex>[rr]^{s_0^X} \ar[dd]_{=} && X\times X \ar[rr]_{d} \ar[ll]^{p_1^X} \ar[dd]_{\rho}  && X \ar[dd]_{=}\ar@<-1ex>[ll]_{\iota_0^X} && X\times  X \ar[dd]_{p_1^X} \ar[rr]_d  && X \ar[dd] \ar@<-1ex>[ll]_{\iota_0^X}  \\
		&& \\
		X \ar@<1ex>[rr]^{\iota_0^X} && X\times X \ar[rr]_{p_1^X} \ar[ll]^{p_0^X}   &&  X \ar@<-1ex>[ll]_{\iota_1^X} &&  X \ar@<-1ex>@{ >->}[uu]_{s_0^X}  \ar[rr]_{} && 1\ar@<-1ex>[ll]_{} \ar@<-1ex>[uu]_{}
	}
	$$
	which is equivalent to the fact that the right hand side square is a pullback of split epimorphisms. The axiom added to a hyper-S\l om\'inski setting is $\rho.\iota_0^X=\iota_1^X$, namely  v) $d(x,1)=x$.
	Recall that a subtraction in the sense of \cite{UR} is binary operation $d$ satisfying iv) $d(x,x)=1$ and v) $d(x,1)=x$. Whence the following
	\begin{defi}
		A hypersubtraction on an object $X$ in a category $\EE$ is a hyper-S\l om\'inski setting $(X,d,1)$ satisfying $d(x,1)=x$, or equivalently $1\circ x=x$.
	\end{defi}
	When $\EE$ is finitely complete, we shall denote by $\Sbt\EE$ and  $\HSt\EE$ the categories of objects endowed with internal subtractions and hypersubtractions. $\Sbt\EE$ is a subtractive category in the sense of \cite{ZJ}, while obviously $\HSt\EE\subset \SSlom\EE$ is a protomodular one. From Proposition \ref{Slomab}, we get:
	\begin{coro}
		Let $\AA$ be an additive category. Then $\HSt\AA\simeq\AA$.
	\end{coro}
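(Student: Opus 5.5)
Using Proposition \ref{Slomab}, I will show that in an additive category a hypersubtraction is forced to be the standard subtraction $d(x,y)=x-y$. From this the equivalence follows.

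Let $(X,d,1)$ be a hypersubtraction in $\AA$; its base point $1$ is the zero map. By Proposition \ref{Slomab}, a hyper-S\l om\'inski setting is determined by an automorphism $f$ of $X$, with
$$d(x,y)=f(x-y)\quad\text{and}\quad x\circ y=x+f^{-1}(y).$$
The extra axiom v), $d(x,0)=x$, says $f(x)=x$, so $f=Id_X$. Hence $d(x,y)=x-y$ and $x\circ y=x+y$.

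Conversely, for every object $X$ the pair $d(x,y)=x-y$, $x\circ y=x+y$ satisfies axioms i), ii), iv) and v). Indeed,
$$d(x\circ z,z)=x,\qquad x\circ d(z,x)=z,\qquad d(x,x)=0,\qquad d(x,0)=x.$$
So each object carries exactly one hypersubtraction. Because the proof of Proposition \ref{Slomab} only uses $d(x,x)=0$ and additivity, this holds even without finite completeness. The only ingredient needed is binary products, which exist in any additive category as biproducts.

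For the functor, consider the forgetful functor $\HSt\AA\to\AA$. It is bijective on objects, by the uniqueness just shown. It is also full: every morphism $h:X\to Y$ in $\AA$ is additive, so
$$h(x-y)=h(x)-h(y),$$
which means $h$ is a $d$-homomorphism. It is faithful by definition. Therefore it is an isomorphism of categories, and in particular an equivalence, giving $\HSt\AA\simeq\AA$.

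The main point to watch is the appeal to Proposition \ref{Slomab} for the invertibility of $f$. Here this is harmless, since v) gives $f=Id_X$ directly. Alternatively, one can redo the decomposition $d(x,y)=d(x-y,0)+d(y,y)$ by hand, using only the additivity of morphisms out of the biproduct $X\oplus X$.
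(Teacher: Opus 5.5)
Your proof is correct and takes the same route as the paper: invoke Proposition \ref{Slomab} and observe that axiom v), $d(x,0)=x$, forces $f=Id_X$. Your additional steps (checking that $d(x,y)=x-y$ satisfies the axioms, showing that the forgetful functor is bijective on objects and full, and noting that finite completeness is not needed) just spell out what the paper's one-line proof leaves implicit.
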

	\proof
	According to the notation of this proposition, from $d(x,0)=x$, the isomorphism $f$ is $Id$.
	\endproof 
	Whence the string of inclusions: $\Gp\EE \subset \HSt\EE \subset \SSlom\EE\subset \Slom\EE$ of protomodular categories. As such, they determine a subcategory of abelian  objects, see Proposition \ref{abprot}.
	The respective subcategories $\Ab(\Slom\EE)\simeq\Slom(\Ab\EE)$, $\Ab(\SSlom\EE)\simeq \SSlom(\Ab\EE)$ are described by Proposition \ref{Slomab}. By the previous corollary we get $\Ab\HSt\EE=\HSt\Ab=\Ab$.
	
	\subsection{Class B: prequandles}
	
	It is worth saying a word about the class B which make commutative the following diagram:
	$$
	\xymatrix@=8pt{
		X\times X \ar[dd]_{p_1^X}  \ar[rr]^<<<<<<{\rho}  && {\; X\times X} \ar[dd]_{p_0^X} \\
		&& \\
		X \ar@<-1ex>[uu]_{s_0^X}  \ar[rr]_{=}  && X \ar@<-1ex>[uu]_{s_0^X} 
	}
	$$
	First, recall the following definition, introduced independly \cite{Jo} and \cite{Mat}, in relationship with the three Reidemeister moves in knot theory, see also \cite{Bq}:
	\begin{defi}
		A quandle is a set $X$ endowed with binary operation  $\trio: X\times X \to X$ which is idempotent and such that $-\trio x$ is $\trio$-automorphism, namely such that $(x\trio y)\trio z=(x\trio z)\trio(y\trio z)$. 
	\end{defi}
	\noindent\textbf{Examples}\\
	1) The example 1 of ILO setting is a quandle, namely a {\em trivial} quandle.\\
	2) Any group $(G,\cdot)$ produces a quandle with the law $x\trio_Gy=y\cdot x\cdot y^{-1}$.\\
	3) With any pair $((A,+,0),f)$ of an abelian group and a $+$-automorphism $f$ is associated the {\em Alexander} quandle on $A$ defined by: $x\trio_f y=f(x)+y-f(y)$. 
	
	When $\EE$ is a left exact category, define $\Alex\EE$ as the category whose objects are the pair $(X,f)$ of an object $X$ in $\EE$ and an isomorphism $f:X\to X$ in $\EE$ and whose morphisms are those maps $h: X\to Y$ which commute with these isomorphisms. When $\AA$ is additive, the category $\Alex\AA$ is additive as well. The Alexander quandles  defines a left exact conservative functor $\Al: \Alex\Ab \to \Qnd$, which is  consequently faithful.
	
	It is then meaningful to  introduce the following definition for the ILO settings of class B:
	\begin{defi}
		An ILO setting of class B (namely, an idempotent ILO setting) will be called a prequandle.
	\end{defi}
	We shall denote by $\circ_{\trio}$, the adjoint operation of $\trio$. 
	The category of internal prequandles in $\EE$ will be denoted by $\PQd\EE$ and the category of internal quandles by $\Qnd\EE$. Since the law $\trio$ of a prequandle is idempotent, any element $x$ of a prequandle $(X,\trio)$ determines a map $x: 1 \to (X,\trio)$ in $\PQd$, and given any map $h: (X,\trio) \to (Y,\trio)$, any inverse image $h^{-1}(y)$ is a prequandle. 
	
	We have $\Ab(\Qnd)\simeq \Alex\Ab$, see \cite{Bq}. We get the same result for prequandles:
	\begin{prop}\label{aba}
		The functor $\Al: \Alex\Ab \to \Ab(\PQd)$ is an isomorphism.
	\end{prop}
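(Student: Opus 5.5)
We have to show that $\Al$ lands in $\Ab(\PQd)$ and is bijective on objects and on morphisms. Here $\Ab(\PQd)$ denotes the abelian group objects in $\PQd=\PQd\Set$, i.e.\ internal abelian groups in the category of prequandles. Such an object is a prequandle $(X,\trio)$ with a commutative group structure $(+,0)$ whose operations $+:X\times X\to X$ and $0:1\to X$ are prequandle morphisms. Since $\trio$ is idempotent, $1$ is the one-point prequandle, so $0$ is just any element.

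\textbf{Well-definedness.} For $(A,f)\in\Alex\Ab$, put $x\trio_f y=f(x)+y-f(y)$.
\begin{itemize}
\item It is idempotent: $x\trio_f x=x$.
\item The map $(x,y)\mapsto(y,x\trio_f y)$ is invertible, with adjoint $y\circ_{\trio} t=f^{-1}(t-y)+y$. So $\trio_f$ is a prequandle.
\item Addition is a $\trio_f$-homomorphism, because $\trio_f$ is additive in the pair $(x,y)$.
\item The map $0$ is a morphism since $0\trio_f 0=0$.
\end{itemize}
A morphism $h$ in $\Alex\Ab$ is additive and commutes with $f$, so it preserves $\trio_f$. Faithfulness of $\Al$ is clear, since it does not change underlying maps.

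\textbf{Fullness.} Let $h:\Al(A,f)\to\Al(B,g)$ be a morphism in $\Ab(\PQd)$. Then $h$ is additive, since it is a morphism of internal groups, hence $h(0)=0$. Preserving $\trio$ gives $h(x\trio_f 0)=h(x)\trio_g 0$, that is, $h(f(x))=g(h(x))$. So $h$ is a morphism in $\Alex\Ab$.

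\textbf{Essential surjectivity, indeed surjectivity on objects.} This is the main step. Let $(X,\trio,+,0)$ be an abelian group object in $\PQd$. Since $\trio:X\times X\to X$ is a homomorphism of abelian groups $(X,+)^2\to(X,+)$, it decomposes as
\[ x\trio y=(x,0)\trio\text{-part}+(0,y)\trio\text{-part}=\alpha(x)+\beta(y), \]
with $\alpha=-\trio 0$ and $\beta=0\trio -$ additive endomorphisms. We then use the two prequandle axioms in turn.
\begin{itemize}
\item Idempotency gives $\alpha(x)+\beta(x)=x$, so $\beta=1-\alpha$.
\item The ILO condition says $(x,y)\mapsto(y,\alpha(x)+\beta(y))$ is bijective. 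This holds exactly when $\alpha$ is bijective: for fixed $y$, the map $x\mapsto\alpha(x)+\beta(y)$ is a translate of $\alpha$.
\end{itemize}
So $f:=\alpha$ is an automorphism and $x\trio y=f(x)+y-f(y)$, i.e.\ $(X,\trio,+)=\Al(X,f)$. The object $(X,f)$ is uniquely determined, because $f=-\trio 0$ is recovered from the data.

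Together with faithfulness and fullness, this gives that $\Al$ is an isomorphism of categories, not merely an equivalence. The key point, and the only place where the prequandle structure is genuinely used, is that the ILO invertibility forces $f$ to be invertible. This is exactly what separates prequandles from mere idempotent magmas.
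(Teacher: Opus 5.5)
Your proof is correct and follows essentially the paper's route: the interchange law $(x+x')\trio(y+y')=(x\trio y)+(x'\trio y')$ splits $\trio$ additively, and idempotency then forces $\trio=\trio_f$ with $f=-\trio 0$. The paper decomposes $(a,b)=(a-b,0)+(b,b)$ rather than $(x,0)+(0,y)$, but this is the same computation, and you additionally make explicit two points the paper leaves unstated: why $f$ is bijective (the ILO condition restricted to $y=0$) and that $\Al$ is full and faithful on morphisms.
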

	\proof
	The functor $\Al: \Alex\Ab \to \PQd$ actually takes its values in $\Ab(\PQd)$ since any $\trio_f: (X,+)\times (X,+) \to (X,+)$ is a group homomorphism:\\
	$(x+y)\trio_f(x'+y')=f(x+y)+(x'+y')-f(x'+y')$, while:
	$(x\trio x')+(y\trio_fy')=f(x)+x'-f(x')+f(y)+y'-f(y')$. They are equal since  $f$ is a group homomorphism.
	
	Now given any internal abelian group: $(X,\trio)\times (X,\trio)\to (X,\trio)$ in the category $\PQd$, we get $(x+x')\trio(y+y')=(x\trio y)+(x'\trio y')$. With $x'=0=y'$, we check that $f(x)=x\trio 0$ defines a bijective group homomorphism. With $x=a-b$, $x'=b=y'$ and $y=0$, we get $a\trio b=((a-b)\trio 0)+(b\trio b)= f(a-b)+b=a\trio_fb$. So we get $\Al((X,+),f)=((X,\trio),+)$, with $f=-\trio 0$. 
	\endproof
	
	More generally, let $\AA$ be any left exact additive category and $(X,f)$ any object of $\Alex\AA$. Define the internal binary operation: $\trio_f= X\times X \stackrel{f\times (Id_X-f)}{\longrightarrow} X\times  X \stackrel{+}{\rightarrow} X$ which gives an internal  prequandle structure on $X$ in $\AA$ whose adjoint operation $\circ$ is given by $x\circ y=y\trio_{f^{-1}} x$. So, $\circ=\trio_{f^{-1}}^{op}$. Whence a left exact conservative functor $\Al_\AA: \Alex\AA\to\PQd\AA$.
	\begin{prop}\label{abprq}
		Given any finitely complete additive category $\AA$, the functor $\Al_\AA$ is an isomorphism which is factorized through the inclusion $\Qnd\AA\subset \PQd\AA$ and produces the isomorphisms 
		$\PQd\AA=\Qnd\AA\simeq\Alex\AA$.
	\end{prop}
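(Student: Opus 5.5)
We must show three things: every internal prequandle in $\AA$ is $\trio_f$ for a unique automorphism $f$ of $X$; every $\trio_f$ is a quandle; and morphisms correspond. We will argue with generalized elements (maps $T\to X$), which is legitimate since $\AA$ is additive and finitely complete. Then every morphism $X\times X\to X$ decomposes along the biproduct.

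\textbf{Step 1 (form of the operation).} Let $(X,\trio)$ be an internal prequandle in $\AA$. Any map $\trio: X\times X\to X$ is of the form $x\trio y = f(x)+g(y)$ with $f=\trio.\iota_0^X$ and $g=\trio.\iota_1^X$, exactly as in the proof of Proposition \ref{Slomab}. Idempotency $x\trio x=x$ gives $f+g=Id_X$, so $g=Id_X-f$ and $\trio=\trio_f$.

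\textbf{Step 2 ($f$ is an automorphism).} The ILO condition says that $(p_1^X,\trio): X\times X\to X\times X$, i.e. $(x,y)\mapsto (y, f(x)+(1-f)(y))$, is an isomorphism. Consider the triangular matrix of this map with respect to the biproduct. It is invertible if and only if $f$ is invertible. Concretely, an inverse of $f$ is obtained by restricting $\rho^{-1}$ to $y=0$ and composing with a projection. Conversely, if $f$ is invertible, the explicit inverse $(u,v)\mapsto (f^{-1}(v-(1-f)(u)),u)$ exists, which is consistent with the formula $\circ=\trio_{f^{-1}}^{op}$ already stated. Hence $\Al_\AA$ is essentially surjective on objects and injective on objects, since $f=\trio.\iota_0^X$ is recovered from $\trio_f$.

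\textbf{Step 3 (morphisms).} A map $h:X\to Y$ preserves $\trio_f,\trio_{f'}$ if and only if $hf(x)+h(1-f)(y)=f'h(x)+(1-f')h(y)$. Setting $y=0$ gives $hf=f'h$, and conversely this identity implies preservation. So $\Al_\AA$ is full and faithful, hence an isomorphism of categories $\Alex\AA\simeq\PQd\AA$.

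\textbf{Step 4 (quandle axiom).} It remains to check that each $\trio_f$ is right self-distributive. Expanding with additivity of $f$:
\[(x\trio y)\trio z=f^2x+f(1-f)y+(1-f)z,\]
\[(x\trio z)\trio(y\trio z)=f(fx+(1-f)z)+(1-f)(fy+(1-f)z).\]
These agree because $f$ commutes with $1-f$ and $f(1-f)+(1-f)^2=1-f$. Therefore $\PQd\AA=\Qnd\AA$.

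Every step is routine linear algebra in an additive category. The only point needing care is Step 2: we must extract invertibility of $f$ cleanly from invertibility of the $2\times2$ triangular matrix. The plan is to write $\rho^{-1}=(\alpha,\beta)$ and read off, from $\rho\rho^{-1}=Id$ and $\rho^{-1}\rho=Id$ evaluated on $\iota_0^X$, that $\alpha.\iota_1^X$ is a two-sided inverse of $f$.
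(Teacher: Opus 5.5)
Your proof is correct and follows the paper's route. You write the idempotent internal operation additively as $\trio=\trio_f$, deduce from the ILO condition that $f$ is invertible, and note that $\trio_f$ satisfies the quandle law. Your splitting $(x,y)=(x,0)+(0,y)$, followed by idempotency, is a cosmetic variant of the paper's $(x,y)=(x-y,0)+(y,y)$.

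You also supply several details the paper only asserts: the triangular-matrix argument that $f$ is invertible, the bijection on morphisms, and the explicit distributivity computation. One small point in the invertibility step: $f.\alpha.\iota_1^X=Id_X$ comes from $\rho\rho^{-1}=Id$ precomposed with $\iota_1^X$, not $\iota_0^X$. The other identity, $\alpha.\iota_1^X.f=Id_X$, does come from $\rho^{-1}\rho.\iota_0^X=\iota_0^X$.
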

	\proof
	In the additive setting, the definition of $\trio_f$ necessarily satifies the axiom of a quandle, whence the factorization $\Al_\AA: \Alex_\AA\to \Qnd\AA\subset \PQd\AA$.\\
	Now, let $\trio$ be any internal binary idempotent operation on $X$ in $\AA$. Let us set $f(x)=x\trio 0=f(x)$. We get:
	$(x+x')\trio(y+y')=(x\trio y)+(x'\trio y')$. From $(x,y)=(x-y,0)+(y,y)$, we get $x\trio y=(x-y\trio 0)+(y\trio y)=f(x-y)+y=f(x)+y-f(y)$. If, moreover, $\trio$ is underlying a prequandle, the map $f=-\trio 0$ is an isomorphism, and we get $\trio=\trio_f$.
	\endproof
	In other words, the only internal prequandles in an additive category are the Alexander quandles. 
	
	\smallskip  
	
	\noindent\textbf{Remark.} Strangely enough, the additive context produces four isomorphic reduction: $\SSlom\AA\simeq\Alex\AA\simeq \PQd\AA\simeq \Qnd\AA$.
	
	\section{Some recalls about protomodular and Mal'tsev categories}
	
	On the one hand, we mentioned that any category $\Slom\EE$ or $\SSlom\EE$ of internal S\l om\'in\'ski or internal hyper-S\l om\'in\'ski settings is protomodular. It is a strong structural property which requires some recalls.
	
	On the other hand, the structural aspect of the category $\PQd\EE$ of internal prequandles is weaker than protomodularity, but far from being insignificant. It is related to the notion of Mal'tsev category which will require some recalls as well.  
	
	\subsection{Protomodular categories}
	\begin{defi}\cite{Bprot}
		A protomodular category $\EE$ is a finitely category such that, given any pair of commutative squares of vertical split epimorphisms: 
		$$
		\xymatrix@=20pt{
			\bullet \ar[d] \ar[r] & \bullet \ar[d] \ar[r]  & \bullet \ar[d] \\
			\bullet \ar@<-1ex>@{ >->}[u]  \ar[r] & \bullet \ar[r] \ar@<-1ex>@{ >->}[u] & \bullet \ar@<-1ex>@{ >->}[u] 
		}
		$$
		the right hand side square is a pullback as soon as so are the left hand side one and the whole rectangle.
	\end{defi}
	The major examples are the categories $\Gp$ of groups, $\Rng$ of rings, and any category of $R$-algebras when $R$ is a ring. Any additive category is protomodular. The protomodular varieties are characterized in \cite{BJ}. Any protomodular category is a Mal'tsev one. The main point is that in any {\em pointed protomodular category} we get all the classical homological lemmas, see \cite{BB}. We shall produce further examples in the next section.
	
	Given any finitely complete category,  denote by $\Pt\EE$ the category whose objects are the split epimorphisms $(f,s):X\splito Y$ in $\EE$ and whose morphisms are the commutative squares between them. Let $\P_\EE: \Pt\EE \to \EE$ be the functor associating with any split epimorphism $(f,s)$ its codomain $Y$; it is a fibration (called {\em fibration of points}) whose cartesian maps are the pullbacks; we denote by $\Pt_Y\EE$ the pointed fiber of split epimorphisms above $Y$.
	It is easy to check that $\EE$ is protomodular if and only if any base-change functor of the fibration $\P_\EE$ is conservative. When $\EE$ is pointed, it is enough that any base-change functor $\alpha_Y^*:\Pt_Y\EE \to \EE$ along the initial  maps (namely any "kernel functor") is conservative.
	We systematically used in the previous section the following
	\begin{prop}\cite{BB}\label{abprot}
		Let $\EE$ be a pointed protomodular category. On any object $X$ there is atmost one structure of internal unitary magma which is necessarily an internal abelian group. When it is the case, we say that $X$ is an abelian object. In this way, we get a full inclusion $\Ab\EE\subset \EE$, where $\Ab\EE$ is an additive category.
	\end{prop}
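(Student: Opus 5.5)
We work in a pointed protomodular category $\EE$ and write all arguments with generalized elements, as the paper does. The proof has three steps: unit, commutativity, then associativity and inverses. The central tool is the following form of protomodularity. In the pointed case every kernel functor $\alpha_Y^*:\Pt_Y\EE\to\EE$ is conservative. For the split epimorphism $(p_0,\iota_0):X\times X\splito X$, with kernel $X$ via $\iota_1$, this means that the pair $(\iota_0,\iota_1)$ is jointly strongly epic. In particular, two maps $X\times X\to Z$ coincide as soon as they agree on $\iota_0$ and on $\iota_1$.

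\textbf{Step 1: the unit.} Let $m:X\times X\to X$ be an internal unitary magma structure, so $m(x,0)=x=m(0,x)$. Take a second such structure $m'$. Both $m$ and $m'$ restrict to the identity along $\iota_0$ and along $\iota_1$. By joint strong epicity they coincide, which gives uniqueness.

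\textbf{Step 2: commutativity.} Compare $m$ with $m.\tw$. Both again restrict to the identity along $\iota_0$ and $\iota_1$, so $m=m.\tw$.

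\textbf{Step 3: associativity and inverses.} For associativity I would compare $m(m\times 1)$ and $m(1\times m)$ on $X\times X\times X$. The triple product is the product of $X\times X$ and $X$, so the same joint epicity applies to $X\times X\times X$ with the injections $\iota_{X\times X}$ and $\iota_X$. I would then reduce further along $X\times X$ by the same argument. The inductive step is the delicate part, because we only know a priori that maps into $X$ agree on injections. The cleaner route is to use the fact that $m$ is itself a magma morphism $X\times X\to X$; this is checked on the injections, since $m(m\times m)\tau$ and $m(m\times m)$ agree there. This yields the middle-interchange law, and middle interchange together with the unit gives associativity (Eckmann--Hilton).

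For inverses I would use protomodularity directly. Consider the map of split epimorphisms over $X$ given by $(p_0,m):X\times X\to X\times X$, sending $(p_0,\iota_0)$ to itself. On kernels it restricts to the identity of $X$, because $m(0,y)=y$. By conservativity of the kernel functor, $(p_0,m)$ is an isomorphism. Its inverse produces a division $y\mapsto$ the unique $z$ with $x+z=y$, and hence $-x$. Thus $X$ is an internal abelian group.

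\textbf{Additivity.} Finally, $\Ab\EE$ is full: a morphism between abelian objects automatically preserves $m$, by checking on the injections. $\Ab\EE$ is closed under finite products, so it is finitely complete. Its hom-sets carry the pointwise abelian group structure, which is bilinear. Hence $\Ab\EE$ is additive.

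The main obstacle is associativity. The cleanest path is Eckmann--Hilton via middle interchange, and the key point to verify there is that the joint epicity of the injections transfers correctly to maps out of $(X\times X)\times(X\times X)$.
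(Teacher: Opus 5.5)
The paper gives no proof of this statement; it quotes it from \cite{BB}. There it comes out of the general theory of unital and strongly unital categories. Uniqueness, commutativity and associativity of a unitary magma only need the injections $X\to X\times Y\leftarrow Y$ to be jointly strongly epic, and the inverses need strong unitality, so the result holds well beyond the protomodular case (e.g.\ in pointed Mal'tsev categories). Your argument instead uses protomodularity directly, through the conservativity of the kernel functors $\alpha_Y^*$, which is the same mechanism the paper uses in Proposition \ref{iloprot}. It is shorter and self-contained, at the cost of that generality. It is correct, modulo three points you should fix.

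First, your shear map does not send $(p_0,\iota_0)$ to itself. Since $m\iota_0^X=1_X$, you get $(p_0,m)\iota_0^X=(1_X,1_X)=s_0^X$, the diagonal. So $(p_0,m)$ is a morphism in $\Pt_X\EE$ from $(p_0,\iota_0^X)$ to $(p_0,s_0^X)$. This matters, because conservativity of $\alpha_X^*$ only concerns morphisms of points. With the correct codomain, both kernels are $\iota_1^X$, the induced map is $m(0,-)=1_X$, and your conclusion stands.

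Second, associativity needs no induction. For any objects $A,B$, the kernel of $(p_A,\iota_A):A\times B\splito A$ is $\iota_B$. A mono $w$ through which $\iota_A$ and $\iota_B$ factor is then a morphism of points over $A$ that is invertible on kernels, hence an isomorphism; so $(\iota_A,\iota_B)$ is jointly strongly epic. Take $A=X\times X$ and $B=X$: both $m(m\times 1)$ and $m(1\times m)$ restrict to $m$ along $\iota_{X\times X}$ and to $1_X$ along $\iota_X$, so they coincide. The same remark with $A=B=X\times X$ settles the point you flag for the interchange route, which is therefore also valid, just longer.

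Third, ``closed under finite products, so finitely complete'' is a non sequitur. It is also unnecessary, since a zero object, finite products and the bilinear enrichment already give additivity. The claim is true anyway: by fullness, $\Ab\EE$ is the category of internal abelian groups, whose forgetful functor creates limits.
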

	\begin{prop}\label{iloprot}
		Let $\EE$ be a pointed protomodular category. Then $(X,d)$ is an internal hyper S\l om\'inski setting if and only if $d(-,0)$ is an isomorphism.
	\end{prop}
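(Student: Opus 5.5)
The plan is to read the statement inside the fibre $\Pt_X\EE$ and then apply the conservativity of the kernel functor that characterizes pointed protomodular categories. Throughout, $(X,d)$ is a binary operation with $d(x,x)=0$ (axiom iv)). By the discussion before Definition \ref{sslom}, $(X,d)$ is then a hyper-S\l om\'inski setting exactly when the square
$$d.s_0^X=0, \qquad p_1^X.s_0^X=Id_X$$
of split epimorphisms, with $X\to 1$ at the bottom, is a pullback.

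First I will rephrase the pullback condition as a comparison map. The pullback of $X\to 1$ along $X\to 1$ is the product $X\times X$, with $p_1^X$ as the split epimorphism and $\iota_0^X=(Id_X,0)$ as its section. The comparison map from $(X\times X,p_1^X,s_0^X)$ into this pullback is $\rho=(d,p_1^X): X\times X\to X\times X$. It commutes with the projections to $X$ by construction. It commutes with the sections because $\rho.s_0^X(x)=(d(x,x),x)=(0,x)$, which is exactly the section of the pullback, up to the ordering convention chosen for the factors. So $\rho$ is a morphism in the fibre $\Pt_X\EE$, and $(X,d)$ is hyper-S\l om\'inski if and only if $\rho$ is an isomorphism.

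Next I compute the image of $\rho$ under the kernel functor $\alpha_X^*:\Pt_X\EE\to\EE$. The kernel of $p_1^X$ is $X$, embedded by $x\mapsto(x,0)$, and the kernel of the target split epimorphism is also $X$, embedded in the same way. Hence the restriction of $\rho$ to kernels is $x\mapsto d(x,0)$, that is, $\alpha_X^*(\rho)=d(-,0)$.

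Both implications then follow. If $\rho$ is an isomorphism, then so is $\alpha_X^*(\rho)=d(-,0)$. Conversely, since $\EE$ is pointed protomodular, $\alpha_X^*$ is conservative, as recalled above, so $d(-,0)$ being an isomorphism forces $\rho$ to be an isomorphism.

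The only point needing care is the bookkeeping. One must check that the section of the pullback really is the one $\rho.s_0^X$ lands in, which is exactly where $d(x,x)=0$ is used. One must also check that the kernel inclusions match on both sides, so that the induced map is literally $d(-,0)$ and not, say, $d(0,-)$. Beyond that I expect no genuine obstacle.
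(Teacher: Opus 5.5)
Your proposal is correct and is essentially the paper's argument. The paper pastes the kernel square of $(p_1^X,s_0^X)$ with the square defined by $d$ and applies the pullback-cancellation axiom: the composite is a pullback exactly when $d.\iota_0^X=d(-,0)$ is invertible. This is the same fact as your use of conservativity of $\alpha_X^*$ on the comparison map $\rho$ in $\Pt_X\EE$. You are right to make the tacit hypothesis $d(x,x)=0$ explicit, since the statement fails without it, e.g.\ for $d(x,y)=x$.

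The only slip is notational. With $\rho=(d,p_1^X)$ the target section is $(0,Id_X)$, not $\iota_0^X=(Id_X,0)$, which is not a section of $p_1^X$. Alternatively, keep the paper's convention $\rho=(p_1^X,d)$ over $(p_0^X,\iota_0^X)$.
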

	\proof
	Apply  the definition axiom to the following diagram:
	$$
	\xymatrix@=20pt{
		X \ar[d] \ar[r]^{\iota_0^X} & X\times X \ar[d]_{p_1^X} \ar[r]^d  & X \ar[d] \\
		\bullet \ar@<-1ex>@{ >->}[u]  \ar[r] & X \ar[r] \ar@<-1ex>@{ >->}[u]_{s_0^X} & \bullet \ar@<-1ex>@{ >->}[u] 
	}
	$$
	where the left hand side diagram is a kernel diagram. So, the right hand side square is a pullback, if and only if $d.\iota_0^X$ is an isomorphism.
	\endproof
	\begin{prop}
		Let $\EE$ be a protomodular category. We get $\Slom\EE=\Slom\Ab\EE$, and $\SSlom\EE=\SSlom\Ab\EE\simeq \Alex(\Ab\EE)$.
	\end{prop}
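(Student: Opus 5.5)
The plan is to show that the underlying object $X$ of any internal S\l om\'inski setting $(X,d,\circ,1)$ in a protomodular category $\EE$ is an abelian object, and that $d$ and $\circ$ are then morphisms of $\Ab\EE$. Once this is done, the identification $\Slom\EE=\Slom\Ab\EE$ follows, and Proposition \ref{Slomab}, applied in the additive category $\Ab\EE$, gives the rest. The key observation is that the S\l om\'inski axioms produce an internal Mal'tsev operation. I set
$$p(x,y,z)=z\circ d(x,y).$$
Then $p(x,x,z)=z\circ 1=z$, by axiom iv) and the fact that $1$ is a right unit. Also $p(x,y,y)=y\circ d(x,y)=x$, by axiom ii). So $p$ is an internal Mal'tsev operation on $X$ in $\EE$.

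Since any protomodular category is a Mal'tsev one, I then invoke the classical fact that in a Mal'tsev category any internal Mal'tsev operation is associative and commutative. It follows that $x+y:=p(x,1,y)$ is an internal abelian group structure with unit $1$: it is unitary because $p(x,1,1)=x$ and $p(1,1,y)=y$. Alternatively, in the pointed case one may simply note that $+$ is an internal unitary magma and apply Proposition \ref{abprot}, which forces it to be the unique internal abelian group structure. Either way, $X$ is an abelian object of $\EE$.

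Next I must check that $d\colon X\times X\to X$ and $\circ\colon X\times X\to X$ are morphisms in $\Ab\EE$, i.e. additive for this group law. In the pointed case this is immediate: $\Ab\EE$ is full in $\EE$ by Proposition \ref{abprot}, and $X\times X$ is the biproduct in $\Ab\EE$. In the non-pointed case I would argue through the uniqueness of Mal'tsev operations: any morphism between objects carrying internal Mal'tsev operations in a Mal'tsev category commutes with them, so $d$ and $\circ$ commute with $p$. Since $d(1,1)=1$ and $1\circ 1=1$, they also preserve the unit, hence they preserve $+$. I expect this transfer of the structure maps into $\Ab\EE$ (and, before it, quoting the right form of the Mal'tsev-operation result without pointedness) to be the only delicate point. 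Concretely, it amounts to checking that the structure morphisms of the setting commute with the canonical Mal'tsev operations. This gives $\Slom\EE=\Slom\Ab\EE$, and the same argument gives $\SSlom\EE=\SSlom\Ab\EE$.

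Finally, $\Ab\EE$ is a finitely complete additive category, so Proposition \ref{Slomab} applies. A S\l om\'inski setting there is a triple $(X,f,g)$ with $d(x,y)=f(x-y)$ and $x\circ y=x+g(y)$. It is hyper-S\l om\'inski exactly when $f$ is invertible with $g=f^{-1}$, so such a setting is entirely determined by the pair $(X,f)$ with $f$ an automorphism. I would then check that morphisms correspond: a map $h$ commutes with $d$ if and only if $h.f=f.h$, using the additivity of $h$. This yields the isomorphism $\SSlom(\Ab\EE)\simeq\Alex(\Ab\EE)$, $(X,d)\mapsto (X,d(-,0))$.
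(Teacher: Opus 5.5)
Your proof is correct and follows the paper's route: the S\l om\'inski axioms give an internal Mal'tsev operation, which makes $X$ an abelian object so that the setting lives in $\Ab\EE$, and Proposition \ref{Slomab} then finishes. Your operation $z\circ d(x,y)$ is just the mirror image of the paper's $x\circ d(z,y)$. You also make explicit a step the paper leaves implicit: why $d$, $\circ$ and the morphisms of settings lie in $\Ab\EE$, using fullness in the pointed case and preservation of Mal'tsev operations otherwise.
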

	\proof
	Let $(X,d,\circ,1)$ be a S\l om\'inski setting in $\EE$. Then $p(x,y,z)=x\circ d(z,y)$ produces an internal ternary operation in $\EE$ satisfying $p(x,x,z)=x\circ d(z,x)=z$ and $p(x,y,y)=x\circ d(y,y)=x\circ 1=x$. Then the associated internal unitary magma on $X$ defined by $x*y=p(x,1,y)$ makes $X$ an abelian object in $\EE$, and thus produces a S\l om\'inski setting in the subcategory $\Ab\EE$. Whence $\Slom\EE=\Slom\Ab\EE$. And consequenly $\SSlom\EE=\SSlom\Ab\EE\simeq \Alex(\Ab\EE)$ by Proposition \ref{Slomab}.
	\endproof
	
	\subsection{Structural aspect of prequandles}
	The structural aspect of the category $\PQd\EE$ of internal prequandles is partially related to the  notion of Mal'tsev category.
	\begin{defi}\cite{CLP}
		A Mal'tsev category is a finitely complete category in which any internal reflexive relation is an equivalence relation.
	\end{defi}
	Any protomodular and, a fortiori, any additive category is a  Mal'tsev one.
	\begin{prop}
		Let $\EE$ be a Mal'tsev  category. Then any internal prequandle $(X,\trio)$ in $\EE$ is a quandle. Whence $\PQd\EE=\Qnd\EE$.
	\end{prop}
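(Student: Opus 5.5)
We have to show that in a Mal'tsev category $\EE$ every internal prequandle $(X,\trio)$ satisfies self-distributivity: $(x\trio y)\trio z=(x\trio z)\trio(y\trio z)$. Idempotency and invertibility of $-\trio z$ hold by definition. The plan is to encode the defect of self-distributivity as an internal reflexive relation. In a Mal'tsev category that relation must be an equivalence relation, and we will see that this forces it to be the diagonal, which is exactly the required identity. Using generalized elements (Yoneda), we may argue set-theoretically with the internal operations $\trio$ and $\circ_\trio$, since every construction below is finite-limit definable.

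The first candidate is
$$R=\{(u,v)\in X\times X \;:\; \exists\, x,y,z,\ u=(x\trio y)\trio z,\ v=(x\trio z)\trio(y\trio z)\}.$$
This has the drawback of being an image rather than a subobject cut out by equations, and an image need not exist in a mere finitely complete $\EE$. So we rather take the relation on $X^3$ given by the kernel pair of the two maps $\alpha,\beta: X^3\to X$, $\alpha(x,y,z)=(x\trio y)\trio z$ and $\beta(x,y,z)=(x\trio z)\trio(y\trio z)$. That is not directly useful either.

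A better choice is a relation on $X$ parametrized by the equalizer:
$$S=\{(a,b)\in X\times X \;:\; \forall\text{-free form: } (a\trio y)\trio z=(a\trio z)\trio(y\trio z) \text{ and } \dots\}.$$
In practice I would take $S\subset X\times X$ to be the set of pairs $(x,y)$ such that $(x\trio y)\trio z=(x\trio z)\trio(y\trio z)$ for the fixed generalized element $z$. More precisely, work in the slice over $X$ (the $z$-coordinate), which is again Mal'tsev. There $S\rightarrowtail X\times X\times X$ is the equalizer of $\alpha$ and $\beta$, viewed as a relation on $X\times X\to X$ over $X$. Reflexivity holds by idempotency: $(x\trio x)\trio z=x\trio z=(x\trio z)\trio(x\trio z)$. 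Hence $S$ is symmetric. Symmetry gives $(y\trio x)\trio z=(y\trio z)\trio(x\trio z)$ whenever the identity holds for $(x,y)$, and transitivity gives similar closure properties.

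The main obstacle is that reflexivity plus symmetry does not immediately yield $S$ as the total relation. We need a trick showing that every pair lies in $S$ via transitivity. I would try to exploit the invertibility of $-\trio y$ and of $-\trio z$, choosing intermediate elements $w=(x\circ_\trio\ldots)$ so that $(x,w)$ and $(w,y)$ lie in $S$ for trivial reasons, e.g.\ $w=z$. Indeed $(x,z)\in S$ since $(x\trio z)\trio z=(x\trio z)\trio(z\trio z)$. Likewise $(z,y)\in S$ since $(z\trio y)\trio z$ versus $(z\trio z)\trio(y\trio z)=z\trio(y\trio z)$ … this is not automatic. If that fails, I would instead use the Mal'tsev characterization via difunctionality, or the existence of the relation generated by a span, applying it to the relation $\{(x\trio z, x)\}$ twisted by $\alpha,\beta$. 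The goal remains to produce a reflexive relation whose transitivity directly encodes self-distributivity, and I expect finding that relation to be the crux.
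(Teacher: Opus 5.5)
\textbf{There is a gap, but it is a one-line one.} Your relation $S$ is the right object, and you have already collected every ingredient of a proof. However, you stop at the decisive step and declare it unresolved, so as written the argument is incomplete.

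\textbf{The missing observation.} You do not need to verify $(z,y)\in S$ by direct computation. You are right that $(z\trio y)\trio z=z\trio(y\trio z)$ is not a consequence of the prequandle axioms; instead it follows from what you already have:
\begin{itemize}
\item Your identity $(x\trio z)\trio z=(x\trio z)\trio(z\trio z)$ holds for \emph{every} $x$, in particular for $y$, so $(y,z)\in S$.
\item $S$ is reflexive in the Mal'tsev category $\EE/X$, hence symmetric, so $(z,y)\in S$.
\item Transitivity with $(x,z)\in S$ then gives $(x,y)\in S$.
\end{itemize}
In terms of generalized elements at stage $X^3$: the maps $(p_0,p_2,p_2)$ and $(p_1,p_2,p_2)$ factor through the equalizer $S\rightarrowtail X^3$ of $\alpha$ and $\beta$. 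By symmetry (swapping the first two coordinates, fixing $z$), so does $(p_2,p_1,p_2)$. Transitivity, applied to the pairs $\bigl((p_0,p_2),(p_2,p_2)\bigr)$ and $\bigl((p_2,p_2),(p_1,p_2)\bigr)$, shows that $(p_0,p_1,p_2)=Id_{X^3}$ factors through $S$. Hence $S\rightarrowtail X^3$ is an isomorphism and $\alpha=\beta$.

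If you prefer to avoid the slice, view $S$ as a reflexive relation on the object $X\times X$ of $\EE$. It relates $(x,z)$ and $(y,z')$ when $z=z'$ and $\alpha(x,y,z)=\beta(x,y,z)$; the same argument applies. Your detours through images, kernel pairs and difunctionality are unnecessary.

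\textbf{Comparison with the paper.} Once completed, your argument checks the same two identities as the paper but is organized differently. The paper uses the characterization of Mal'tsev categories by joint strong epicness of the canonical inclusions into a pullback of split epimorphisms. Here $X^3$ is the pullback of $(x,y)\mapsto y$ and $(y,z)\mapsto y$, both split by diagonals. The two canonical inclusions are the subobjects $\{y=z\}$ and $\{x=y\}$. The maps $\alpha,\beta$ agree on these, by exactly your reflexivity computation and your $w=z$ computation, so their equalizer is an isomorphism.

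Your route derives this joint strong epicness directly from the definition (reflexive $\Rightarrow$ equivalence relation), applied in the slice. It is more self-contained, at the price of the standard fact that $\EE/X$ is again Mal'tsev. The paper's proof is a two-line check, given the cited characterization.
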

	\proof
	We have to check $(x\trio y)\trio z=(x\trio z)\trio (y\trio z)$. It is enough to check it when $x=y$ and $y=z$. We indeed get: $(x\trio x)\trio z=x\trio z=(x\trio z)\trio (x\trio z)$; and:
	$(x\trio z)\trio z=(x\trio z)\trio (z\trio z)$.
	\endproof
	There is a characterization of Mal'tsev categories through the fibration of points:
	\begin{prop}\cite{BB}
		A finitely complete category $\EE$ is a Mal'tsev one if and only if, given any pair $((f,s),(g,t))$ of split epimorphisms with common codomain $Z$, the canonical pair $(\iota_X^t,\iota_Y^s)$ of inclusions toward the pullback $X\times_ZY$
		$$
		\xymatrix@=6pt{
			X\times_Z  Y \ar[dd]_{p_X} \ar[rrr]_{p_Y}  &&& Y \ar[dd]_g \ar@<-1ex>@{ >->}[lll]_{\iota_Y^s} \\
			&&{(M)\;\;\;\;\;\;\;}&& \\
			X \ar@<-1ex>@{ >->}[uu]_{\iota_X^t}  \ar[rrr]_{f} &&& Z\ar@<-1ex>[lll]_{s} \ar@<-1ex>[uu]_{t}
		}
		$$
		is jointly strongly epic.
	\end{prop}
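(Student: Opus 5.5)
We prove both implications, going through the characterization of Mal'tsev categories by internal relations, namely that $\EE$ is Mal'tsev exactly when every internal reflexive relation is an equivalence relation. All arguments are carried out with generalized elements, i.e. in $\Set$ after applying Yoneda.

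\emph{Mal'tsev implies jointly strongly epic.} Let $m:M\rightarrowtail X\times_Z Y$ be a monomorphism through which both $\iota_X^t=(Id_X,tf)$ and $\iota_Y^s=(sg,Id_Y)$ factor. We must show that $m$ is an isomorphism.
\begin{itemize}
\item Consider the relation $R\rightarrowtail X\times Y$ defined by $xRy$ iff $f(x)=g(y)$ and $(x,y)\in M$.
\item Form the pullback $W$ of the kernel pairs $R[f]$ and $R[g]$ over the kernel pair of $Z$, and let $\overline R$ be the relation on $X\times_Z Y$ given by $(x,y)\,\overline R\,(x',y')$ iff $f(x)=f(x')$ and $(x,y'),(x',y)$ both lie in $M$.
\item The standard trick: the relation $S$ on $X$ defined by $x\,S\,x'$ iff $f(x)=f(x')$ and $(x,g\text{-compatible }t f x')\in M$ is reflexive, thanks to $\iota_X^t$.
\end{itemize}
The cleaner route, which we adopt, uses the "difunctional" form of the Mal'tsev property: every relation $R\subseteq A\times B$ satisfies $aRb,\ a'Rb,\ a'Rb'\Rightarrow aRb'$. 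Given $(x,y)\in X\times_Z Y$ with $z=f(x)=g(y)$, we have $(x,tz)\in M$ via $\iota_X^t$, $(sz,tz)\in M$, and $(sz,y)\in M$ via $\iota_Y^s$. Viewed as a relation from $X$ to $Y$, $M$ is difunctional because $\EE$ is Mal'tsev, so $(x,y)\in M$. Hence $m$ is an isomorphism.

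\emph{Jointly strongly epic implies Mal'tsev.} It suffices to prove difunctionality of an arbitrary relation $R\rightarrowtail A\times B$, or equivalently that every reflexive relation is symmetric. Let $R\rightarrowtail X\times X$ be reflexive. Take $Z=X$, and take both split epimorphisms to be $(p_0,\delta):R\splito X$, with $\delta$ the reflexivity map. The pullback $R\times_X R$ then consists of pairs $(xRy,\ xRy')$.
\begin{itemize}
\item Let $M\subseteq R\times_X R$ be the subobject of those pairs with $yRy'$. This is a pullback of $R$ along a map, so it is a subobject.
\item $M$ contains $\iota$-images of the form $(xRy,\ xRx)$ and $(xRx,\ xRy')$ as soon as $yRx$ and $xRy'$ hold. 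The first of these already requires symmetry, which is a problem.
\end{itemize}
So we choose the data more carefully. Take $Z=R$, take $X=Y$ to be the object of pairs $(x\,R\,y,\ y'\,R\,y)$ with the evident splittings, and define the subobject by the condition $x\,R\,y'$. This is the standard proof in \cite{BB}: one checks that both inclusions land in it by reflexivity, so the subobject is everything, which yields $x\,R\,y,\ y'\,R\,y\Rightarrow x\,R\,y'$. Combined with reflexivity, this gives symmetry and then transitivity.

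\emph{Main obstacle.} The delicate step is this second direction: finding the right pair of split epimorphisms, built from a relation, whose pullback's subobject encodes difunctionality while both canonical sections stay inside it using only reflexivity. We would first try $Z=B$, with $X=Y$ both equal to the object of triples $\{(a,a',b): aRb,\ a'Rb\}$, split over $B$ by the diagonal-type sections, and check the two factorizations directly. The remaining task is to check that these sections are well defined using only reflexivity.
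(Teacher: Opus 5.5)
Your first direction is correct. View $M\rightarrowtail X\times_Z Y\rightarrowtail X\times Y$ as a relation from $X$ to $Y$. The facts $x\,M\,tz$, $sz\,M\,tz$ and $sz\,M\,y$, together with difunctionality, give $x\,M\,y$ for every generalized element, so $m$ is invertible. Since the paper defines Mal'tsev via reflexive relations, you should justify or cite that all relations are then difunctional. Quick proof: for $R\rightarrowtail A\times B$, the relation on $R$ given by $(a,b)\,S\,(a',b')\iff a'Rb$ is reflexive, hence symmetric, and symmetry of $S$ is exactly difunctionality of $R$. Delete the abandoned bullet points at the start.

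The converse has a genuine gap: you never exhibit a pair of split epimorphisms that works.
\begin{itemize}
\item Your first attempt fails, as you noticed.
\item The second attempt takes $Z=R$ and $X=Y$ the object of pairs $(xRy,\,y'Ry)$, with the evident splitting $(x,y,y')\mapsto(x,y)$. Then $\iota_X^t=(Id_X,tf)$ is the identity on the first factor. So asking it to factor through the subobject cut out by $xRy'$ is already the statement you want to prove, and reflexivity gives nothing. Calling this ``the standard proof'' does not replace the check.
\item The fallback in your last paragraph is not even a split epimorphism. A section $B\to\{(a,a',b): aRb,\ a'Rb\}$ would have to choose, for each $b$, some $a$ with $aRb$, and an arbitrary relation $R\rightarrowtail A\times B$ provides no such choice.
\end{itemize}
The step you flag as the ``main obstacle'' is exactly what is missing.

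The missing idea is to use two \emph{different} split epimorphisms whose sections are reflexivity maps or diagonals. Here are two ways to do it.
\begin{itemize}
\item For a reflexive $R$ on $X$, take $(d_1^R,s_0^R)$ and $(d_0^R,s_0^R)$ over $X$. The pullback $R\times_XR$ consists of composable pairs $(uRv,vRw)$. Let the subobject be the pullback of $R$ along $(u,v,w)\mapsto(u,w)$. The inclusions $uRv\mapsto(uRv,vRv)$ and $vRw\mapsto(vRv,vRw)$ land in it by reflexivity, so $R$ is transitive. This is precisely the argument the paper itself uses for acupuncturing reflexive relations.
\item Symmetry then follows by applying transitivity to the reflexive relation $S$ above. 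From $xRy$ you get $(x,x)\,S\,(x,y)$ and $(x,y)\,S\,(y,y)$, hence $(x,x)\,S\,(y,y)$, which means $yRx$.
\item Alternatively, get difunctionality at once over $Z=R$. Take the split epimorphisms $R\times_BR\splito R$ and $R\times_AR\splito R$, each given by the first projection and the diagonal. The pullback consists of the $(a,b,a',b')$ with $aRb$, $a'Rb$, $aRb'$. The two inclusions send elements to $(a,b,a',b)$ and $(a,b,a,b')$, and both satisfy $a'Rb'$. So the subobject defined by $a'Rb'$ is everything, and with reflexivity this gives symmetry and transitivity.
\end{itemize}
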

	\noindent where:
	\begin{defi}
		A pair $(u,v)$ of subobjects of $X$ as on the left hand side is called jointly strongly epic in $\EE$
		$$
		\xymatrix@=6pt{
			&&  W \ar@{ >->}[dd]^w &&  && && && X \ar@{.>}[dd] && \\
			U \ar@{ >->}[rrd]_u \ar@{.>}[rru] &&&&  V \ar@{ >->}[lld]^v  \ar@{ >.>}[llu] &&&& U \ar[rrd]_f \ar@{ >->}[rru]^u &&&&  V \ar[lld]^g \ar[ull]_v\\
			&& X && && && && Z &&
		}
		$$
		when any subobject $w$ of $X$ containing $u$ and $v$ is necessarily an isomorphism.
	\end{defi}
	When $(u,v)$ is jointly strongly epic, there is at most one vertical factorization making the right hand side diagram commute: given any pair of such maps, take their equilizer.
	We are now  going to show that $\PQd\EE$ satisfies the Mal'tsev property, but only relatively to a certain class $\Theta$ of split epimorphims of $\EE$.
	\begin{defi}
		Let $\EE$ be a finitely complete category, and $\Theta$ a class of split epimorphisms of $\EE$ containing the isomorphisms and stable under pullbacks along any map in $\EE$. Then $\EE$ is said to be a $\Theta$-Mal'tsev category when the property of the diagram $(M)$ is only demanded for the split epimorphism $(g,t)$ belonging to $\Theta$.
	\end{defi}
	We are ready to define a pullback stable class $\Theta$ of split epimorphisms in $\PQd$:
	\begin{defi}
		A element $x\in (X,\trio)$ is said to be acupuncturing when the map $y\mapsto x\trio y$ is bijective.
		We call acupuncturing a split epimorphism $(f,s): (X,\trio)\to (Y,\trio)$ when for any $y\in Y$, the element $s(y)$ is acupuncturing in the prequandle $f^{-1}(y)$.
	\end{defi}
	A prequandle $(X,\trio)$ is a latin ILO setting if and only if any element is acupuncturing. We shall denote by $\Theta$ the class of acupuncturing split epimorphisms in $\PQd$ and by $\LPq$ the subcategory of $\PQd$ whose objects are the  latin prequandles. 
	\begin{prop}
		The class $\Theta$ is stable under pullback and contains the isomorphims.
	\end{prop}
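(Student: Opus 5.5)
The proof works elementwise in $\PQd$, that is, for set-based prequandles. First the isomorphism case. If $(f,s):(X,\trio)\to(Y,\trio)$ is a split epimorphism with $f$ an isomorphism, then $s=f^{-1}$, and each fibre $f^{-1}(y)$ is the singleton $\{s(y)\}$. On a singleton prequandle the map $z\mapsto s(y)\trio z$ is trivially bijective. So $s(y)$ is acupuncturing in $f^{-1}(y)$, and every isomorphism lies in $\Theta$.

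For pullback stability, take $(f,s):(X,\trio)\splito(Y,\trio)$ in $\Theta$ and any morphism $h:(Z,\trio)\to(Y,\trio)$ of prequandles. The pullback is
\[ Z\times_Y X=\{(z,x)\,:\,h(z)=f(x)\}, \]
with componentwise operation $\trio$ (and adjoint $\circ_{\trio}$ computed componentwise). This is a prequandle, since the products and the pullbacks in $\PQd$ are computed as in $\Set$: the ILO and idempotency axioms are equational, and the defining isomorphism $(p_1,d)$ is computed componentwise. The pulled-back split epimorphism is $(\bar f,\bar s)$ with $\bar f(z,x)=z$ and $\bar s(z)=(z,s(h(z)))$. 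Fix $z\in Z$ and set $y=h(z)$. The fibre $\bar f^{-1}(z)=\{(z,x)\,:\,f(x)=y\}$ is isomorphic, as a prequandle, to $f^{-1}(y)$ via $(z,x)\mapsto x$, because $z\trio z=z$ in the first component. Under this isomorphism $\bar s(z)$ corresponds to $s(y)$. The map $(z,x)\mapsto \bar s(z)\trio (z,x)=(z,\,s(y)\trio x)$ is therefore conjugate to $x\mapsto s(y)\trio x$ on $f^{-1}(y)$, which is bijective by hypothesis. Hence $(\bar f,\bar s)\in\Theta$.

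The only point needing care is that pullbacks in $\PQd$ are computed as in $\Set$ and that fibres are subprequandles; both rest on idempotency. The paper already noted this for fibres: the inverse images $h^{-1}(y)$ are prequandles. Everything else is bookkeeping. If one wants the statement for $\PQd\EE$ internally, the same argument applies with generalized elements, reading ``$x\mapsto s(y)\trio x$ bijective on the fibre'' as the corresponding map on the kernel-pair-type fibre object being an isomorphism. Pullback-stability of isomorphisms then gives the result, since the fibre of the pulled-back map over $z$ is the fibre of $(f,s)$ over $h(z)$.
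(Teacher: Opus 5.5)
Your proposal is correct and follows essentially the paper's argument: the fibre of the pulled-back split epimorphism over a point is identified, as a prequandle, with the fibre of the original one over its image, with the sections matching, so the acupuncturing property transfers; the isomorphism case is immediate because the fibres are singletons. Your componentwise description of the pullback simply makes explicit the paper's identifications $f^{-1}(y)=f'^{-1}(h(y))$ and $s'(h(y))=k(s(y))$.
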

	\proof
	Given any pullback in $\PQd$ as on the right hand side with $(f',s')\in \Theta$:
	$$
	\xymatrix@=8pt{
		f^{-1}(y)\ar[dd] \ar@{ >->}[rr]  && X \ar[dd]_f \ar[rr]^k && X'\ar[dd]_{f'}\\
		&&&&\\
		1 \ar@<-1ex>[uu]_{s(y)} \ar[rr]_{y} && Y \ar@<-1ex>[uu]_{s} \ar[rr]_h &&  Y' \ar@<-1ex>[uu]_{s'}
	}
	$$
	we have $f^{-1}(y)=f'^{-1}(h(y))$ and $s'(h(y))=k(s(y))$. Consequently $s(y)$ is acupuncturing in $f^{-1}(y)$ since $s'(h(y))$ is acupuncturing in $f'^{-1}(h(y))$; so, $(f,s)$ is in $\Theta$. The second point is straightforward.
	\endproof
	Suppose $(g,t): (Y,\trio) \splito (Z,\trio) \in \Theta$. The fact that $t(z)$ is acupucturing in $g^{-1}(z)$ defines a bijective mapping $\theta_z: g^{-1}(z)\to g^{-1}(z)$ such that, $\forall y\in  g^{-1}(z)$, we get $t(z)\trio \theta_z(y)=y$.
	\begin{prop}
		The category $\PQd$ is a $\Theta$-Mal'tsev category.
	\end{prop}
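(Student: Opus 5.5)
We have to show the following. Let $(f,s):(X,\trio)\splito(Z,\trio)$ be any split epimorphism in $\PQd$, and let $(g,t):(Y,\trio)\splito(Z,\trio)$ be in $\Theta$. Then the pair $(\iota_X^t,\iota_Y^s)$ of inclusions into the pullback $X\times_Z Y$ is jointly strongly epic. The category $\PQd$ is a variety, so the underlying sets of subobjects are subalgebras. It therefore suffices to take a sub-prequandle $W\subseteq X\times_Z Y$ containing all pairs $(x,tf(x))$ and all pairs $(sg(y),y)$, and to prove $W=X\times_ZY$. Note that a sub-prequandle must be closed under both $\trio$ and its adjoint $\circ_\trio$, since these are the two operations of the structure.

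Fix $(x,y)\in X\times_ZY$ and put $z=f(x)=g(y)$. The element $y$ lies in the fibre $g^{-1}(z)$, so we may write $y=t(z)\trio\theta_z(y)$, where $\theta_z$ is the bijection given by the acupuncturing property.

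\textbf{Choice of generators.} We take
$$a=(x,t(z))=\iota_X^t(x)\in W,\qquad b=(s(z),\theta_z(y))=\iota_Y^s(\theta_z(y))\in W .$$
The second membership holds because $g(\theta_z(y))=z$, hence $sg(\theta_z(y))=s(z)$.

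\textbf{Key computation.} We would like $a\trio b=(x\trio s(z),\,y)$, but the first coordinate is not $x$ in general. We correct this using the adjoint operation. By axiom iii), $x\circ_\trio y'=t\iff y'=t\trio x$ (reading $d=\trio$). So instead we compute $c=a\circ_\trio b'$ for a suitable $b'\in W$, or we combine several steps. Concretely:
\begin{enumerate}
\item First form $u=(x,t(z))\trio (s(z),t(z)) = (x\trio s(z),\, t(z))$, using idempotency $t(z)\trio t(z)=t(z)$. The pair $(s(z),t(z))$ lies in $W$ because it is $\iota_X^t(s(z))$.
\item Next form $u\trio(s(z),\theta_z(y))$. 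This is wrong on the first coordinate again.
\end{enumerate}
The cleanest route seems to be: choose $x'\in X$ with $x'\trio s(z)=x$. Such an $x'$ exists because $-\trio s(z)$ is bijective, this being the ILO axiom, with inverse $s(z)\circ_\trio -$. The element $x'$ lies in $f^{-1}(z)$, since $f(x')\trio z=z$ forces $f(x')=d$-preimage, namely $f(x')=z\circ_\trio z=z$ by idempotency. Then
$$(x',t(z))\trio(s(z),\theta_z(y))=(x'\trio s(z),\,t(z)\trio\theta_z(y))=(x,y).$$
Both factors lie in $W$: the first is $\iota_X^t(x')$ and the second is $\iota_Y^s(\theta_z(y))$. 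Hence $(x,y)\in W$ and $W$ is the whole pullback.

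\textbf{Main obstacle.} The delicate point is checking that the auxiliary elements really lie in the right fibres, so that the pairs belong to the pullback. For $x'=s(z)\circ_\trio x$ we have $f(x')=z\circ_\trio z=z$; for $\theta_z(y)$ the fibre condition holds by construction. The acupuncturing hypothesis is used exactly once, to split $y$ as $t(z)\trio\theta_z(y)$ inside its fibre. This is why $(g,t)$ must lie in $\Theta$, while $(f,s)$ is arbitrary: on the $X$ side only the ILO invertibility of $-\trio s(z)$ is needed.
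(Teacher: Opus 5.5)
Your final argument is correct and is exactly the paper's proof: it writes $(x,y)=\iota_X^t(s(z)\circ_\trio x)\trio\iota_Y^s(\theta_z(y))$, using the ILO inverse $s(z)\circ_\trio -$ of $-\trio s(z)$ on the $X$-side and the acupuncturing bijection $\theta_z$ on the $Y$-side. You should delete the abandoned steps in your ``Key computation'' paragraph and the garbled phrase about a ``$d$-preimage''. The fibre condition needs nothing more than this: $f(s(z)\circ_\trio x)=z\circ_\trio z=z$, because $f$ is a $\circ_\trio$-homomorphism and $\circ_\trio$ is idempotent.
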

	\proof
	Consider any pullback with $(g,t)\in \Theta$:
	$$
	\xymatrix@=8pt{
		X\times_Z  Y \ar[dd]_{p_X} \ar[rr]_{p_Y}  && Y \ar[dd]_g \ar@<-1ex>@{ >->}[ll]_{\iota_Y^s} \\
		&&&& \\
		X \ar@<-1ex>@{ >->}[uu]_{\iota_X^t}  \ar[rr]_{f} && Z\ar@<-1ex>[ll]_{s} \ar@<-1ex>[uu]_{t}
	}
	$$
	and $U\subset X\times_ZY$ a subquandle such that for all $x\in X$, we have $\iota_X^t(x)=(x,tf(x))\in U$ and for all $y\in Y$, we have $\iota_Y^s(y)=(sg(y),y)\in U$. For any $(x,y)\in X\times_ZY$ with $f(x)=z=g(y)$, we get: $(x,y)=( ((s(z)\circ_{\trio}x)\trio s(z),t(z)\trio \theta_z(y)))$\\ $=((s(z)\circ_{\trio}x),t(z))\trio (s(z),\theta_z(y))$
	$=\iota_X^t(s(z)\circ_{\trio}x)\trio\iota_Y^s(\theta_z(y))$\\ where $\circ_\trio$ is the adjoint operation of $\trio$. So, any $(x,y)\in X\times_ZY$ belongs to $U$.
	\endproof 
	Let us briefly mention two meaningful consequences of this $\Theta$-Mal'tsev  property.
	Let $(d_0^R,d_1^R): R\rightarrowtail X\times X$ be  any internal reflexive relation in $\PQd$. Denote by $s_0^R: X\to R$ the map characterizing the reflexivity. Then define a reflexive relation $R$ as {\em acupuncturing} when the split epimorphism $(d_0^R,s_0^R)$ is a acupuncturing.
	\begin{prop}
		Any internal acupuncturing reflexive relation $R$ is transitive.\\
		An internal equivalence relation is acupuncturing if and only if any class $\bar x$ is a latin prequandle.
	\end{prop}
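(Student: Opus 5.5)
For transitivity I will follow the standard argument showing that reflexive relations in a Mal'tsev category are transitive, but run it inside the $\Theta$-Mal'tsev property just established. The acupuncturing hypothesis is placed exactly where that property needs it. Consider the pullback of $d_0^R$ along $d_1^R$:
$$
\xymatrix@=8pt{
R\times_X R \ar[dd]_{p_0} \ar[rr]^{p_1} && R \ar[dd]^{d_0^R} \\
&&\\
R \ar[rr]_{d_1^R} && X \ar@<-1ex>[uu]_{s_0^R}
}
$$
An element of $R\times_X R$ is a pair $((x,y),(y,z))$ with $xRy$ and $yRz$. We want to show that the map $R\times_X R\to X\times X$, $((x,y),(y,z))\mapsto (x,z)$, factors through $R$. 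Since $(d_0^R,s_0^R)\in\Theta$, the pullback square above, equipped with the splittings $s_0^R$ (along $d_1^R$) and the one induced on $p_0$, is a diagram $(M)$ whose right-hand split epimorphism is in $\Theta$. Hence the two canonical inclusions into $R\times_X R$ are jointly strongly epic. These inclusions are $(x,y)\mapsto((x,y),(y,y))$ and $(y,z)\mapsto((y,y),(y,z))$.

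Now let $W\subset R\times_X R$ be the inverse image of $R$ under the prequandle morphism $(d_0^R p_0, d_1^R p_1): R\times_X R\to X\times X$. It is a sub-prequandle, since inverse images of subobjects along morphisms are subobjects in $\PQd$. It contains both inclusions: their images are $(x,y)\in R$ and $(y,z)\in R$. Joint strong epicity therefore gives $W=R\times_X R$, which is transitivity. Concretely, one can also verify this directly. With $z$-fibre bijection $\theta_y$ of $d_0^R$ at $y$, write $((x,y),(y,z))=((x',y),(y,y))\trio((y,y),(y,\theta_y(y,z)))$ as in the proof above, and use that $R$ is closed under $\trio$.

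For the second statement, let $R$ be an equivalence relation. The fibre $(d_0^R)^{-1}(x)$ is $\{(x,y): yRx\}$, which is isomorphic as a prequandle to the class $\bar x$ through the second projection, since $\trio$ is computed componentwise and $x\trio x=x$. Under this isomorphism $s_0^R(x)=(x,x)$ corresponds to $x\in\bar x$. So $R$ is acupuncturing iff, for every $x$, the element $x$ is acupuncturing in $\bar x$, that is, $y\mapsto x\trio y$ is bijective on $\bar x$. If every class is latin, this clearly holds. Conversely, any $x'\in\bar x$ has $\bar{x'}=\bar x$, so $x'$ is acupuncturing in $\bar x$ as well. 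Every element of $\bar x$ is then acupuncturing, which is exactly the latin condition for $\bar x$ (together with left invertibility, which holds since $\bar x$ is a prequandle, being an inverse image of a point under the quotient, or directly as a subprequandle closed under $\circ_\trio$).

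The main obstacle is that last point. We must check that $\bar x$ is genuinely a sub-prequandle, i.e.\ closed under the adjoint operation $\circ_\trio$ as well as $\trio$, so that the word \emph{latin prequandle} makes sense. I would handle this by noting that $\circ_\trio$ is an internal operation, so $R$, being an internal relation in $\PQd$, is closed under it. Then $y,x'\in\bar x$ give $y\circ_\trio x'\in\bar x$ because $x\circ_\trio x=x$.
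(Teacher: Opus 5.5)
Your proof is correct and follows essentially the paper's route. The paper also applies the $\Theta$-Mal'tsev property to the pullback $R\times_X R$ with $(d_0^R,s_0^R)\in\Theta$, takes the sub-prequandle of pairs $(uRv,vRw)$ with $uRw$, which contains both canonical sections, and for the second part identifies the fibre of $d_0^R$ over $x_0$ with the class $\overline{x_0}$ and propagates the acupuncturing property to every element of that class; your check that classes are closed under $\circ_\trio$ just makes explicit a point the paper leaves tacit.
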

	\proof
	Consider the following pullback in $\PQd$:
	$$
	\xymatrix@=8pt{
		R\times_X R \ar[dd]_{d_0^R} \ar[rr]_{d_2^R}  && R \ar[dd]_{d_0^R} \ar@<-1ex>@{ >->}[ll]_{s_1^R} \\
		&&&& \\
		R \ar@<-1ex>@{ >->}[uu]_{s_0^R}  \ar[rr]_{d_1^R} && X \ar@<-1ex>[ll]_{s_0^R} \ar@<-1ex>[uu]_{s_0^R}
	}
	$$
	$R\times_X R$ is the prequandle of the pairs $(uRv,vRw)$. Let us consider, the subquandle $U\subset R\times_X R$ whose objects satisfies $uRw$. The map $s_0^X: R\to r\times_XR$ is define by $s_0^R(uRv)=(uRv,vRv)$ and map $s_0^X: R\to R\times_XR$ is defined by $s_1^R(uRv)=(uRu,uRv)$. The two maps factor through $U$. Since the reflexive relation $R$ is acupuncturing, $U=R\times_X R$ and the reflexive relation $R$ is transitive.
	
	Saying that the equivalence  relation $R$ is acupuncturing is saying that, for any  $x_0\in X$, the element $(x_0,x_0)$ is acupuncturing in the set $\{(x_0,x)/\; x_0Rx\}$. This is equivalent to saying that the element $x_0$ is  acupuncturing in the class $\overline{x_0}$. It is then true for any $x\in\overline{x_0}$, and $\overline{x_0}$ is a latin prequandle. 
	\endproof 
	\begin{prop}
		The subcategory $\LPq\hookrightarrow \PQd$ is stable under finite limits.
		So, $\LPq$ is a Mal'tsev category.
	\end{prop}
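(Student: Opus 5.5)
We need to show two things: that finite limits of latin prequandles, computed in $\PQd$, are again latin, and that $\LPq$ is then Mal'tsev. Since $\PQd$ is a variety with finite limits computed as in $\Set$, it suffices to check three cases.

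\emph{Terminal object.} The singleton $1$ is trivially latin.

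\emph{Binary products.} For latin $(X,\trio)$ and $(Y,\trio)$, the operation on $X\times Y$ is componentwise. For fixed $(x,y)$, the map $(x',y')\mapsto (x,y)\trio(x',y')=(x\trio x',y\trio y')$ is a product of bijections, hence bijective.

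\emph{Equalizers.} Let $h,k:(X,\trio)\to(Y,\trio)$ with $X,Y$ latin, and let $E=\{x : h(x)=k(x)\}$. This is a subprequandle: it is closed under $\trio$, and also under $\circ_\trio$ by the remark that morphisms of ILO settings preserve the adjoint operation. For $e\in E$, the map $x\mapsto e\trio x$ is injective on $E$, being the restriction of a bijection. For surjectivity, take $e'\in E$ and let $x\in X$ be the unique element with $e\trio x=e'$. Then $h(e)\trio h(x)=h(e')=k(e')=k(e)\trio k(x)=h(e)\trio k(x)$. Since $h(e)$ is acupuncturing in the latin $Y$, we get $h(x)=k(x)$, so $x\in E$. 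Hence $E$ is latin. The same argument covers pullbacks directly, if one prefers.

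\emph{The Mal'tsev property.} In $\LPq$, every split epimorphism $(f,s):X\splito Y$ is acupuncturing. Indeed, each fibre $f^{-1}(y)$ is an equalizer of $f$ and the constant map $y$, hence latin by the above, and so $s(y)$ is acupuncturing in it. Thus every split epimorphism of $\LPq$ lies in $\Theta$.

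Given a pullback diagram $(M)$ in $\LPq$, it is also a pullback in $\PQd$ because the inclusion preserves finite limits. Any subobject of $X\times_ZY$ in $\LPq$ is a subprequandle containing both sections. By the $\Theta$-Mal'tsev property of $\PQd$, it is the whole pullback. By the characterization of Mal'tsev categories through the fibration of points, $\LPq$ is therefore Mal'tsev.

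\emph{Main obstacle.} The delicate step is the equalizer case: the solution $x$ must be shown to lie in $E$, which uses latinity of the codomain $Y$. A secondary point is that subobjects in $\LPq$ are monomorphisms of $\LPq$, and these are injective because the inclusion preserves kernel pairs. This is what lets us apply the set-theoretic $\Theta$-Mal'tsev argument to them.
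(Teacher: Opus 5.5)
Your proof is correct and follows the paper's route: you show that $\LPq$ is closed in $\PQd$ under products and equalizers, hence under finite limits, then deduce that every split epimorphism of $\LPq$ is acupuncturing, so the $\Theta$-Mal'tsev property of $\PQd$ makes $\LPq$ a Mal'tsev category. You supply the details the paper dismisses as ``easy to check'', including the equalizer surjectivity argument, the description of fibres as equalizers with a constant map, and the remark that monomorphisms of $\LPq$ are injective.
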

	\proof
	It is easy to check that $\LPq$ is stable under product and equalizer, and thus under pullback. So, any split epimorphism in $\LPq$ is acupuncturing and $\LPq$ is a Mal'tsev category.
	\endproof
	Since $\LPq$ is a variety, the classical Mal'tsev characterization implies the existence of a ternary term $p(x,y,z)$ such that $p(x,x,y)=y=p(y,x,x)$.
	If we denote $d$ the binary law of which $\trio$ is adjoint, we can set $p(x,y,z)=(y\circ x)\trio d(z,y)$. We indeed check $p(x,y,y)=(y\circ x)\trio d(y,y)=(y\circ x)\trio y=x$, while \\
	$p(x,x,z)=(x\circ x)\trio d(z,x)=x\trio d(z,x)=z$.
	\begin{prop}\label{latal}
		Let $\EE$ be a pointed protomodular category. Then $\LPq\EE$ is isomorphic to $\LPq(\Ab\EE)$.
		When $\AA$ is additive, $\LPq\AA$ is isomorphic to the full subcategory $\LAlex\AA$ of $\Alex\AA$ whose objects are the pairs $(X,f)$ such that both $f$ and $Id_X-f$ are isomorphisms.
	\end{prop}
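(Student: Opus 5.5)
The first claim will follow the pattern used just above for $\Slom\EE=\Slom\Ab\EE$: we show that any internal latin prequandle carries an internal abelian group structure for which $\trio$ is a homomorphism. Let $(X,\trio)$ be an internal latin prequandle in $\EE$, with $d$ the operation of which $\trio$ is adjoint. We use the ternary operation built before the statement,
$$p(x,y,z)=(y\circ x)\trio d(z,y).$$
Its two Mal'tsev identities, $p(x,y,y)=x$ and $p(x,x,z)=z$, were checked in a way that uses only the internal axioms i), ii), idempotency and latinness. So $p$ is an internal morphism $X\times X\times X\to X$ in $\EE$, and it satisfies the Mal'tsev identities there.

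Since $\EE$ is pointed, we fix $0:1\to X$ and set $x*y=p(x,0,y)$. This is an internal unitary magma with unit $0$. By Proposition \ref{abprot}, $X$ is then an abelian object and $*$ is its unique abelian group structure. The remaining point, which I expect to be the main obstacle, is to show that $\trio$ (and hence $\circ$ and $d$) is a morphism in $\Ab\EE$, i.e.\ a $*$-homomorphism $X\times X\to X$.

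For this I would use that $\Ab\EE$ is a full subcategory closed under finite products, with $X\times X$ carrying the product group structure. Any morphism between abelian objects in a pointed protomodular category commutes with the Mal'tsev operations up to translation. Concretely, $\trio(a*b)=\trio(a)*\trio(b)*\trio(0)^{-1}$, because the unique Mal'tsev operation on an abelian object is $p(x,y,z)=x*y^{-1}*z$ and every morphism preserves it. We have $\trio(0,0)=0\trio 0=0$ by idempotency. Hence $\trio$ is a group homomorphism. The same argument applies to $\circ$ and $d$, and morphisms of prequandles are automatically group homomorphisms. This yields $\LPq\EE\cong\LPq(\Ab\EE)$.

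If the fact that morphisms preserve the Mal'tsev operation is not available directly, I would prove it as follows. Its pointed form says that a morphism between abelian objects preserving $0$ is additive. This follows from the uniqueness of unitary magma structures in Proposition \ref{abprot}, applied to the graph of the morphism as a subobject of $X\times Y$. That subobject is a reflexive relation, hence an equivalence relation in a Mal'tsev category, and so closed under $p$.

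For the additive case we invoke Proposition \ref{abprq}: every prequandle in $\AA$ is $\trio_f$ with $x\trio_f y=f(x)+(Id_X-f)(y)$ and $f$ an isomorphism. Latinness means that $d^{op}$ also gives an ILO setting, i.e.\ that for each $x$ the map $y\mapsto x\trio y$ is invertible. Since $x\trio y=f(x)+(Id_X-f)(y)$, this holds exactly when $Id_X-f$ is an isomorphism. More internally, the relevant comparison map $X\times X\to X\times X$, $(x,y)\mapsto(x,x\trio y)$, is triangular with diagonal entries $Id_X$ and $Id_X-f$, so it is invertible exactly when $Id_X-f$ is. Morphisms correspond as in Proposition \ref{abprq}, so $\LPq\AA\cong\LAlex\AA$.
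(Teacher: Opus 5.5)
Your proposal is correct and follows the paper's proof. In both, the internal Mal'tsev operation $p(x,y,z)=(y\circ x)\trio d(z,y)$ gives the unitary magma $p(x,0,y)$, so $X$ is abelian by Proposition \ref{abprot}. In the additive case, Proposition \ref{abprq} reduces latinness of $\trio_f$ to invertibility of $Id_X-f$.

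The homomorphism step needs nothing beyond the fullness of $\Ab\EE\subset\EE$ stated in Proposition \ref{abprot}. Neither your ``up to translation'' remark nor the appeal to idempotency is needed, since every map preserves $0$ in a pointed category. You should drop the fallback argument: the graph of $h: X\to Y$ is a relation from $X$ to $Y$, not a reflexive relation on a single object, so that reasoning does not go through. If you want a direct proof of fullness, use instead that $(\iota_0,\iota_1)$ is jointly strongly epic in a pointed Mal'tsev category. The two composites $X\times X\to Y$ expressing additivity of $h$ agree on $\iota_0$ and $\iota_1$, and hence coincide.
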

	\proof
	Given $(X,\trio)$ any latin prequandle is $\EE$. Then $p(x,y,z)=(y\circ x)\trio d(z,y)$ becomes an internal Mal'tsev operation in the pointed protomodular category $\EE$. So, the unitary magma structure on $X$ given by $(x,y)\mapsto p(x,1,y)$ makes the object $X$ an abelian object in $\EE$ and $(X,\trio)$ a latin prequandle inside $\Ab\EE$.
	
	When $\AA$ is additive, and $f$ is an automorphism on $X$, the prequandle $(X,\trio_f)$ is a latin one if and only if $x\mapsto 0\trio_fx=f(0-x)+x=(Id_X-f)(x)$ is an isomorphism.
	\endproof
	
	\subsection{Autonomous latin prequandles}
	\begin{defi}
		A internal prequandle $(X,\trio)$ in $\EE$ is called autonomous when $(x\trio x')\trio (y\trio y')=(x\trio y)\trio (x'\trio y')$.
	\end{defi}
	Any Alexander quandle $\Al(X,+)$ is autonomous. With $y=y'$, we check that any autonomous prequandle is actually a quandle. So, autonomous prequandles and autonomous quandles coincide. Denote by $\APq\EE\subset \PQd\EE$ the subcategory of internal autonomous prequandles.
	
	Now, given any pair $(\circ,*)$ of binary operations on a set $X$, let us define the relation  $\circ\HH *$: "$\circ$ is a $*$-homomorphism". It is symmetric and such that: $\circ \HH *$ if and only if $\circ \HH *^{op}$. Since in $\PQd$ a map is a $\trio$-homomorphism if and only if it is a $\circ$-homomorphism, in the category $\APq$, from $\trio\HH\trio$, we get $\trio \HH \circ$, $\circ \HH \trio$ and then $\circ \HH \circ$.
	
	Consider now the subcategory $\ALPq\EE\subset \APq\EE$ of autonomous latin prequandles. From $\trio \HH \trio$, we get $\trio \HH d$. Accordingly, from $\trio\HH\trio$, $\circ \HH\trio$ and $d\HH\trio$, the ternary term $p(x,y,z)=(y\circ x)\trio d(z,y)$ becomes internal to $\ALPq\EE$. In this way, any object $(X,\trio)\in \ALPq\EE$ is endowed with a natural ternary Mal'tsev  operation . Whence, immediately, the following
	\begin{prop}\label{auto}
		The category $\ALPq\EE$ is a naturally Mal'tsev category.
	\end{prop}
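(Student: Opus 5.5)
Recall that a naturally Mal'tsev category, in the sense of Johnstone, is a finitely complete category $\CC$ in which every object $X$ carries an internal Mal'tsev operation $p_X: X\times X\times X\to X$, natural in $X$. Equivalently, the identity functor of $\CC$ has a natural Mal'tsev structure. So I have two things to establish. First, $\ALPq\EE$ is finitely complete. Second, for each object $(X,\trio)$ the ternary operation $p(x,y,z)=(y\circ x)\trio d(z,y)$ is a morphism $p:(X,\trio)^3\to (X,\trio)$ in $\ALPq\EE$, and it is natural in $(X,\trio)$.

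\textbf{Finite completeness.} I would argue that $\ALPq\EE$ is closed in $\PQd\EE$, hence in $\EE$, under products and equalizers. Autonomy is an equational condition on $\trio$ and is preserved by products and equalizers, just as in $\APq\EE$. Latinness means that $(X,\trio^{op})$ is an ILO setting, i.e. a certain map $X\times X\to X\times X$ is an isomorphism. This is preserved by products. For equalizers, the earlier proposition states that $\LPq\hookrightarrow\PQd$ is stable under finite limits. I would take that result as given, in its internal form for $\EE$. It is proved there for sets; internally the same argument runs through generalized elements, since the inverse operation $d$ of a latin prequandle is preserved by any morphism.

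\textbf{Internality of $p$.} The homomorphism relations established just before the statement do the work:
\[
\trio\HH\trio,\qquad \circ\HH\trio,\qquad d\HH\trio .
\]
These say exactly that $\circ$, $d$ and $\trio$ are morphisms $(X,\trio)\times(X,\trio)\to(X,\trio)$ in $\PQd\EE$. In particular $d\HH\trio$ uses latinness, since $d$ is the inverse of the bijection $\trio$ in the second variable. The map $p$ is the composite
\[
(X,\trio)^3\xrightarrow{\ (\circ\,(p_1,p_0),\ d\,(p_2,p_1))\ }(X,\trio)^2\xrightarrow{\ \trio\ }(X,\trio),
\]
so it is a morphism in $\ALPq\EE$, because that subcategory is full and closed under products.

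\textbf{Mal'tsev identities and naturality.} The Mal'tsev identities $p(x,y,y)=x$ and $p(x,x,z)=z$ were already checked before Proposition \ref{latal}. For naturality, any morphism $h$ of prequandles preserves $\trio$, hence also $\circ$; for latin prequandles it preserves $d$ as well. Therefore $h\,p_X=p_Y\,(h\times h\times h)$.

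The main obstacle is making the internal version of "$d\HH\trio$" rigorous in a general $\EE$. I would first prove it for generalized elements, i.e. in the functor category $\Set^{\EE^{op}}$ via Yoneda, where all the identities can be checked pointwise.
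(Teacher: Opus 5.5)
Your proposal is correct and follows essentially the paper's own argument. The paper also derives $\circ\HH\trio$ and $d\HH\trio$ from autonomy $\trio\HH\trio$, using that a morphism of ILO settings preserves the adjoint operation and that $\HH$ is symmetric, and concludes that $p(x,y,z)=(y\circ x)\trio d(z,y)$ is an internal, natural Mal'tsev operation on every object of $\ALPq\EE$; your explicit check that $\ALPq\EE$ is closed under finite limits (using that morphisms into latin prequandles preserve $d$) is a detail the paper leaves implicit.
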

	This notion was introduced in \cite{Jon} as a category $\AA$ in which any object $X$ is endowed with a natural Mal'tsev operation and, strenghtening the  notion of Mal'tsev category, characterized by the fact that any internal graph is endowed with a unique  groupoid structure. A pointed category is a naturally Mal'tsev one if and only if it is additive.
	
	\section{Semi-direct index and hyperindex}\label{sd}
	
	Let $U: \CC \to \DD$ be now a left exact functor between finitely complete {\em pointed} categories. Let $\Pt_U: \Pt\CC \to \Pt\DD$ be its natural extension. Denote by $W_U: \Pt\CC \to \Pt\DD$ the functor associating with any split epimorphism $(f,s): X \splito Y$ in $\CC$ the split epimorphism $(p_{U(Y)},\iota_{U(Y)}): U(Y)\times U(\Ker f) \splito U(Y)$ in $\DD$. 
	\begin{defi} A semi-direct index for $U$ is a natural isomorphism $\rho$ between $\Pt_U$ and $W_U$ given by an isomorphism $\rho_{(f,s)}$ making the following left hand side diagram commute in $\DD$ for any split epimorphism $(f,s):X\splito Y$ in $\CC$:
		$$
		\xymatrix@=8pt{
			U(X) \ar[dd]_{U(f)}  \ar[rr]^<<<<<<{\rho_{(f,s)}}  && {\; U(Y)\times  U(\Ker f)} \ar[dd]_{p_{U(Y)}} && U(X) \ar[dd]_{U(f)}  \ar[rr]^<<<<<<{\gamma_{(f,s)}} && U(\Ker f) \ar[dd]\\
			\\
			U(Y) \ar@<-1ex>[uu]_{U(s)}  \ar[rr]_{=}  && U(Y) \ar@<-1ex>[uu]_{\iota_{U(Y)}} && U(Y) \ar@<-1ex>[uu]_{U(s)} \ar[rr] && 1 \ar@<-1ex>[uu]
		}
		$$
		or, equivalently, it is given a natural map $\gamma_{(f,s)}: U(X) \to U(\Ker f)$ in $\DD$ making the right hand side square a pullback of split epimorphisms; namely, a pullback satisfying $\gamma_{(f,s)}.U(s)=0 \; (*)$ in $\DD$.
	\end{defi}
	\begin{defi} A semi-direct hyperindex for $U$ is a semi-direct index $\rho$ making the following left hand side diagram commute in $\DD$ for any split epimorphism $(f,s):X\splito Y$ in $\CC$:
		$$
		\xymatrix@=8pt{
			U(Y) \ar@<1ex>[rr]^{U(s)} \ar[dd]_{=} && U(X) \ar[rr]_{\gamma_{(f,s)}} \ar[ll]^{U(f)} \ar[dd]_{\rho_{(f,s)}}  && U(\Ker f) \ar[dd]^{=}\ar@<-1ex>[ll]_{U(k_f)} && U(X) \ar[dd]_{U(f)}  \ar@<-1ex>[rr]_<<<<<<{\gamma_{(f,s)}} && U(\Ker f)\ar[ll]_{U(k_f)} \ar[dd] \\
			&& \\
			U(Y) \ar@<1ex>[rr]^<<<<<{\iota_{U(Y)}} && U(Y)\times U(\Ker f)\ar[rr]_>>>>>{p_{U(K)}} \ar[ll]^>>>>>{p_{U(Y)}}   &&  U(\Ker f) \ar@<-1ex>[ll]_<<<<<{\iota_{U(K)}} && U(Y) \ar@<-1ex>[uu]_{U(s)} \ar@<-1ex>[rr] && 1 \ar@<-1ex>[uu]  \ar[ll]
		}
		$$
		or, equivalently, making the following right hand side square a pullback of split epimorphisms; namely, adding to the index $\gamma$ the axiom  $\gamma_{(f,s)}.U(k_f)=Id_{U(\Ker f)}$.
	\end{defi}
	
	\medskip
	
	\noindent\textbf{Examples}\\
	1) Considering the forgetful functor $V:\Gp \to \Set_*$ from the category of groups to the category of pointed sets, given any split epimorphism $(f,s): X\splito Y$ in $\Gp$, the map $\gamma_{(f,s)}: V(X)\to V(\Ker f)$ defined by $\gamma_{(f,s)}(x)=sf(x)^{-1}.x$ is a semi-direct hyperindex for $U$ whose inverse is given by $(y,k)\mapsto s(y)\circ k$.\\
	2) Let us start with any finitely complete category $\EE$. Denoting the fiber $\Pt_1\EE$ by $\EE_*$ and the category of internal groups in $\EE$ by $\Gp\EE$, it is clear that the previous formula determines a semi-direct hyperindex for the forgetful functor $V_\EE:\Gp\EE \to \EE_*$.\\
	3) Let $R$ be a ring, and $\CC$ be the category of any kind of $R$-algebras, the forgetful functor $U:\CC\to \Ab$ has a semi-direct hyperindex with the group homomorphism $\gamma_{(f,s)}: U(X)\to U(\Ker f)$ defined by $\gamma_{(f,s)}(x)=x-sf(x)$.\\
	4) Consider the category $\SSlom\DD$ of internal hyper-S\l om\'inski settings  in a finitely complete category $\DD$. Then the forgetful functor $\Sigma_\DD: \SSlom\DD\to D_*$ has, for any split epimorphism $(f,s)$ in $\SSlom\DD$, a semi-direct index with the map $\gamma^\Sigma$ defined by $\gamma^\Sigma_{(f,s)}(x)=d(x,sf(x))$, the inverse of $(f,\gamma^\Sigma_{(f,s)}): X\to Y\times  \Ker f$ being produced by $(y,k)\mapsto s(y)\circ k$.\\
	5) Consider the category $\HSt\DD$ of internal hypersubtractions in a finitely complete category  $\DD$. Then the forgetful functor $H_\DD: \HSt\DD\to D_*$ has, for any split epimorphism $(f,s)$ in $\HSt\DD$, a semi-direct hyperindex with the map $\gamma^{HS}$ defined by $\gamma^{HS}_{(f,s)}(x)=d(x,sf(x))$, the inverse of $(f,\gamma^{HS}_{(f,s)}): X\to Y\times  \Ker f$ being produced in the same way by $(y,k)\mapsto s(y)\circ k$.\\
	6) Let $\Di\Gp$ be the category of digroups, namely of quadruples $(X,*,\circ, 1)$ of a set $X$ endowed with two group structures having same unit element, and $\SkB$ the subcategory of {\em left skew braces} \cite{GV} where the two laws are related by the axiom: $a\circ (b * c) = (a\circ b)*a^{-*}* ( a\circ c)$, where $a^{-*}$ denotes the inverse of $a$ in the group $(X,*)$. They are both protomodular, see \cite{BFP}. Clearly the two possible forgetful functors towards $\Gp$ composed with the forgetful functor $V: \Gp\to \Set_*$ produce two absolutely independent (extrinsic) semi-direct hyperindexes. So, the hyperindexes of a functor are far from being unique. It is shown in \cite{BSk} that the thorough description of the split epimorphisms in $\Di\Gp$ and $\SkB$ actually requires one more item than these only two hyperindexes.
	
	\begin{lemma}
		Let $U: \CC \to \DD$ be a left exact functor between finitely complete pointed categories which is endowed a semi-direct index $\rho$. If, in addition, the functor $U$ is conservative, then the category $\CC$ is protomodular, without any assumption on the category $\DD$.
	\end{lemma}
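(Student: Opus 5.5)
Since $\CC$ is pointed, I will use the criterion recalled in Section 2: it is enough to show that every kernel functor $\alpha_Y^*:\Pt_Y\CC\to\CC$, $(f,s)\mapsto \Ker f$, is conservative. So take a morphism $h:X\to X'$ in $\Pt_Y\CC$ between split epimorphisms $(f,s):X\splito Y$ and $(f',s'):X'\splito Y$ with $f'.h=f$ and $h.s=s'$. Suppose the induced map $\bar h:\Ker f\to\Ker f'$ is an isomorphism. The goal is to prove that $h$ is an isomorphism.

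The key step uses the naturality of the semi-direct index $\rho$ with respect to the morphism $(h,Id_Y):(f,s)\to(f',s')$ of $\Pt\CC$. The functor $W_U$ sends this morphism to
$$Id_{U(Y)}\times U(\bar h): U(Y)\times U(\Ker f)\to U(Y)\times U(\Ker f').$$
Naturality then gives
$$\rho_{(f',s')}.U(h)=(Id_{U(Y)}\times U(\bar h)).\rho_{(f,s)}.$$
Since $U$ is left exact, it preserves kernels, and it sends the isomorphism $\bar h$ to an isomorphism $U(\bar h)$. Hence $Id_{U(Y)}\times U(\bar h)$ is an isomorphism in $\DD$. This uses only the existence of binary products in $\DD$, and no further property of $\DD$. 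Because $\rho_{(f,s)}$ and $\rho_{(f',s')}$ are isomorphisms, $U(h)$ is an isomorphism. Conservativity of $U$ then shows that $h$ is an isomorphism in $\CC$, so each kernel functor is conservative and $\CC$ is protomodular.

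The main point needing care is the precise action of $W_U$ on morphisms, which the paper leaves implicit. For a morphism $(x,y):(f,s)\to(f',s')$ of $\Pt\CC$, the only reasonable choice is $U(y)\times U(\Ker(x,y))$, where $\Ker(x,y)$ is the map induced on kernels. I will make this explicit and check that, for $y=Id_Y$, it is exactly $Id\times U(\bar h)$. A second point is the reduction to kernel functors. If one prefers not to rely on the pointed simplification, the same argument works in the general form. For a pullback-induced comparison in the definition of protomodularity, conservativity of the base-change functors along arbitrary maps follows in the pointed case from conservativity along initial maps, as recalled in the paper, so I will simply invoke that recalled fact.
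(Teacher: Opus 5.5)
Your proof is correct and follows the paper's argument. You reduce to conservativity of the kernel functors $\alpha_Y^*$, use naturality of the index along $h$ to get $U(h)$ invertible, and conclude by conservativity of $U$. The only difference is that the paper phrases the middle step with $\gamma$ and pullback cancellation, whereas you compose the isomorphisms $\rho_{(f,s)}$, $Id\times U(\bar h)$ and $\rho_{(f',s')}^{-1}$ directly, which is the equivalent $\rho$-form of the same step.
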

	\proof
	Consider any morphism $h$ of split epimorphisms as on the right hand side:
	$$
	\xymatrix@=20pt{
		\Ker f \ar[d] \ar[r]^{k_f} & X \ar[d]_f \ar[r]^h  & X' \ar[d]_{f'} \\
		1 \ar@<-1ex>@{ >->}[u]  \ar@{ >->}[r]_{\alpha_Y} & Y \ar@{=}[r] \ar@<-1ex>@{ >->}[u]_s & Y \ar@<-1ex>@{ >->}[u]_{s'} 
	}
	$$
	Saying that $\alpha_Y^*(h)$ is an isomorphism is equivalent to saying that the whole rectangle  is a pullback. Now since $\rho$ is underlying a natural transformation, we have the following commutative diagram in $\DD$:
	$$
	\xymatrix@=20pt{
		U(X) \ar[d] \ar[r]_{U(h)} \ar@<2ex>[rr]^{\gamma_{(f,s)}} & U(X') \ar[d] \ar[r]_{\gamma_{(f',s')}}  & \Ker f \ar[d] \\
		U(Y) \ar@<-1ex>@{ >->}[u]  \ar@{=}[r] & U(Y) \ar[r] \ar@<-1ex>@{ >->}[u] & 1 \ar@<-1ex>@{ >->}[u] 
	}
	$$
	where the right hand side square and the whole rectangle are pullbacks. Accordingly the left hand side square is a pullback or, equivalently, $U(h)$ is an isomorphism. Since  $U$ is conservative, so is $h$; and the base-change $\alpha_Y^*$ is conservative as well.
	\endproof
	
	The fourth and the fith examples here above will furnish us characterizations of the functors with semi-direct index and hyperindex.
	\begin{theo}
		Let $U: \CC \to \DD$ be a left exact functor with semi-direct index $\rho$ between finitely complete pointed categories. Then the functor $U$ can be factorized through the forgetful functor $\Sigma_\DD: \SSlom\DD \to \DD$  by a functor $\bar U: \CC \to \SSlom\DD$ in such a way that $\rho=\rho^{\Sigma}.\bar U$. When it has a semi-direct hyperindex, the functor $U$ can be factorized through the forgetful functor $H_\DD: \HSt\DD \to \DD$ by a functor $\bar U: \CC \to \HSt\DD$ in such a way that $\rho=\rho^{HS}.\bar U$.
	\end{theo}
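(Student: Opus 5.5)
The plan is to equip each $U(X)$ with a hyper-S\l om\'inski structure by applying the index to a canonical split epimorphism built from $X$. For any object $X$ of $\CC$, take the split epimorphism $(p_1^X,s_0^X): X\times X\splito X$. Its kernel is $\iota_0^X: X\to X\times X$, so $\Ker p_1^X\cong X$. Since $U$ is left exact, $U(X\times X)=U(X)\times U(X)$ and $U(\Ker p_1^X)=U(X)$. The index then gives a map
$$d_X:=\gamma_{(p_1^X,s_0^X)}: U(X)\times U(X)\to U(X)$$
such that the square with $p_1^{U(X)}$, $s_0^{U(X)}$ and $1$ is a pullback of split epimorphisms. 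This is exactly the second diagram of the subsection on hyper-S\l om\'inski settings. Hence $(U(X),d_X)$ is an ILO setting satisfying $d(x,x)=1$, i.e.\ an object of $\SSlom\DD$, and we set $\bar U(X)=(U(X),d_X)$.

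For morphisms, a map $h:X\to X'$ induces the morphism of split epimorphisms $h\times h$ from $(p_1^X,s_0^X)$ to $(p_1^{X'},s_0^{X'})$. The map it induces on kernels is $h$ itself. Naturality of $\gamma$ then gives $U(h).d_X=d_{X'}.(U(h)\times U(h))$, so $U(h)$ is a $d$-homomorphism and therefore a morphism in $\SSlom\DD$. Functoriality of $\bar U$ is inherited from $U$, and $\Sigma_\DD.\bar U=U$ holds by construction.

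The main step is the equality $\rho=\rho^\Sigma.\bar U$, that is, $\gamma_{(f,s)}(x)=d_X(U(x),U(sf(x)))$ after identifying $U(\Ker f)$ with a subobject of $U(X)$; note that $\gamma^\Sigma$ lands in $\Ker U(f)=U(\Ker f)$. To prove it, compare $(f,s)$ with the canonical point through the morphism of split epimorphisms
$$(Id_X,sf): X\to X\times X,\qquad f: Y\to X? $$
This does not typecheck, since $sf$ is not a map over $Y$. So I would instead use the morphism $(1,sf): X\to X\times X$ over $s:Y\to X$. From $(f,s)$ to $(p_1^X,s_0^X)$ we need a bottom map $Y\to X$, namely $s$, and a top map $X\to X\times X$, namely $x\mapsto (x,sf(x))$. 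The compatibilities are $p_1.(1,sf)=sf=s.f$ and $(1,sf).s=(s,sfs)=(s,s)=s_0^X.s$, so this is a morphism in $\Pt\CC$. The induced map on kernels is $k_f:\Ker f\to X$. Naturality of $\gamma$ along it gives
$$U(k_f).\gamma_{(f,s)}=d_X.(1,U(sf)),$$
which is precisely $\gamma^\Sigma_{\bar U(f,s)}$ composed with the inclusion. Since $U(k_f)$ is a monomorphism ($U$ preserves kernels), this identifies $\gamma$ with $\gamma^\Sigma.\bar U$. I expect this naturality bookkeeping to be the only real obstacle: checking that $\rho$ is natural on all of $\Pt\CC$, not merely fiberwise, and that the identifications of kernels are coherent.

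For the hyperindex, apply the extra axiom $\gamma.U(k)=Id$ to the canonical point with kernel $\iota_0^X$. It yields $d_X(x,1)=x$, so $\bar U$ lands in $\HSt\DD$. The equality $\rho=\rho^{HS}.\bar U$ then follows by the same argument as above.
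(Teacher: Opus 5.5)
Your proposal is correct and follows the paper's proof. You set $d_X=\gamma_{(p_1^X,s_0^X)}$ on $U(X)\times U(X)$, read off the hyper-S\l om\'inski structure (and, via $\gamma.U(k)=Id$ at the kernel $\iota_0^X$, the hypersubtraction axiom $d_X(x,1)=x$), and obtain $\bar U$. Where the paper only asserts that $\rho=\rho^{\Sigma}.\bar U$ holds ``clearly'', you justify it, and the functoriality of $\bar U$, by naturality of $\gamma$ along $h\times h$ and along the morphism $((1,sf),s)$ from $(f,s)$ to $(p_1^X,s_0^X)$, whose kernel component is $k_f$; this is the right argument, and you should simply delete the aborted first attempt marked with the question mark.
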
 
	\proof
	Given any object $X$ in $\CC$, consider the split epimorphism $(p_1^X,s_0^X):X\times X \splito X$, where $p_1^X$ is the second projection and $s_0^X:X\into X\times X$ is the diagonal. Then consider the following isomorphism given by the semi-direct index:
	$$
	\xymatrix@=8pt{
		U(X)\times U(X) \ar[dd]_{p_1^{U(X)}}  \ar[rr]^<<<<<<{\rho_{(p_1^X,s_0^X)}}  && {\; U(X)\times U(X)} \ar[dd]_{p_0^{U(X)}} \\
		&& \\
		U(X) \ar@<-1ex>[uu]_{s_0^{U(X)}}  \ar[rr]_{=}  && U(X) \ar@<-1ex>[uu]_{\iota_0^{U(X)}} 
	}
	$$
	According to Definition \ref{sslom}, the  map $\gamma_{(p_1^X,s_0^X)}:U(X)\times U(X) \to U(X)$, which will be denoted by $d_X$ for sake of simplicity, gives $U(X)$ a structure of hyper-S\l om\'inski setting. Whence the factorization $\bar U: \CC \to \SSlom\DD $, defined by $\bar U(X)=(U(X), d_X,\alpha_X)$ through the forgetful functor $\SSlom\DD \to \DD$; it clearly  satisfies $\rho_{(U(f),U(s))}^\Sigma=\rho_{(f,s)}$. When this index is a hyperindex, the map $d_X$ gives $U(X)$ a structure of hypersubtraction and produces a factorization $\bar U: \CC \to \HSt\DD $, defined by $\bar U(X)=(U(X), d_X,\alpha_X)$ through the forgetful functor $\HSt\DD \to \DD$; it clearly satisfies $\rho_{(U(f),U(s))}^{HS}=\rho_{(f,s)}$.
	\endproof
	The examples 3) of hyperindex achieve their full meaning knowing that $\Ab=\UMag(\HSt)=\HSt(\UMag)$.

\end{document}